\numberwithin{equation}{section}
\newtheorem{theorem}{Theorem}[section]
\newtheorem{lemma}[theorem]{Lemma}
\theoremstyle{definition}
\newtheorem{remark}[theorem]{Remark}
\newcommand{\R}{\mathbb R}
\newcommand{\dd}{\, \mathrm{d}}
\newcommand{\tr}{\mbox{tr}}
\newcommand{\norm}[1]{\left\lVert#1\right\rVert}
\newcommand{\abs}[1]{\left\vert#1\right\vert}
\newcommand{\set}[1]{\left\{#1\right\}}
\numberwithin{equation}{section}
\title[Non-existence of some blowup profiles]{Non-existence of some approximately self-similar singularities for the Landau, Vlasov-Poisson-Landau, and Boltzmann equations}
\author{Jacob Bedrossian}
\address{Department of Mathematics, University of Maryland, College Park, MD 20742 USA}
\email{jacob@cscamm.umd.edu} 
\thanks{JB was partially supported by National Science Foundation RNMS \#1107444 (Ki-Net)}
\author{Maria Pia Gualdani}
\address{Department of Mathematics, The University of Texas at Austin, 2515 Speedway, Austin TX, 78712}
\email{gualdani@math.utexas.edu}
\thanks{MG is partially supported by the DMS-NSF 2019335 and would like to thanks NCTS Mathematical division Taipei for their kind hospitality.}
\author{Stanley Snelson}
\address{Department of Mathematical Sciences, Florida Institute of Technology, Melbourne, FL 32901}
\email{ssnelson@fit.edu}
\thanks{SS was partially supported by a Ralph E. Powe Award from ORAU}
\thanks{This project began at the AIM Workshop ``Nonlocal differential equations in collective behavior.'' The authors would like to thank AIM for their kind hospitality.}
\begin{document}

\maketitle

\begin{abstract}
We consider the homogeneous and inhomogeneous Landau equation for very soft and Coulomb potentials and show that approximate Type I self-similar blow-up solutions do not exist under mild decay assumptions on the profile. We extend our analysis to the Vlasov-Poisson-Landau system and to the Boltzmann equation without angular cut-off.

\end{abstract}

\section{introduction}

We consider the inhomogeneous Landau equation: 
\begin{equation}\label{e:mainL}
\partial_t f + v\cdot \nabla_x f = Q_L(f,f) := \tr( \bar a^f D_v^2 f) + \bar c^f f,
\end{equation}
where, for $f:\R_+\times \R^3 \times \R^3\to \R$, 
\begin{equation}\label{e:coeffs}
\begin{split}
\bar a^f(t,x,v) &:= a_{\gamma}\int_{\R^3} \Pi(v_*) |v_*|^{\gamma+2} f(t,x,v-v_*)\dd v_*,\\
\bar c^f(t,x,v) &:= \begin{cases} c_{\gamma} \int_{\R^3} |v_*|^{\gamma} f(t,x,v-v_*)\dd v_*, &\gamma >-3,\\
f, &\gamma = -3,\end{cases}
\end{split}
\end{equation}
and $\Pi(z) := \left(Id - \dfrac {z\otimes z}{|z|^2}\right)$. The constants $a_\gamma$ and $c_\gamma$ are positive and only depend on $\gamma$. The constant $\gamma$ belongs to the range of very soft potentials, i.e. $\gamma \in [-3,-2]$. For $\gamma\le -2$ the Landau collision operator $Q_L$ shares several similarities with the semilinear operator $\Delta f + f^2$ and a question naturally arises: do smooth solutions  to  (\ref{e:mainL}) stay bounded for all times or do they become unbounded after a finite time? We say that a solution $f$ blows up at a time $T<+\infty$ if it is well defined for all $0<t<T$, and if 
$$
\lim_{t\to T^-} \|f(t,x,v)\|_{L^{\infty}( \R^3 \times \R^3)}=+\infty.
$$
We would call $T$ the blow-up time for $f$. This question of regularity versus singularity formation for (\ref{e:mainL}) is, at the present day, still unanswered.

%Let us briefly discuss previous work on the question of global existence vs. breakdown for the Landau and Boltzmann equations. 

The existence of smooth solutions to the inhomogeneous Landau equation (\ref{e:mainL}) for very soft potentials is known for a short time,  \cite{he2014boltzmannlandau, HST2019rough, HST2018landau}, and for long times under 
simplifying assumptions on the initial data. For example, when the initial data is sufficiently close to
a Maxwellian equilibrium state, solutions exist globally and converge to equilibrium \cite{guo2002periodic}. %, carrapatoso2016cauchy}. %\cite{lions1994boltzmannlandau}    \cite{S2018hardpotentials} and converge to equilibrium . 
Solutions are also known to exist when initial data are near vacuum in the cases of moderately soft potentials \cite{luk2019vacuum} and hard potentials \cite{chaturvedi2020vacuum}. Recently, several new studies concerning regularity and continuation criteria have appeared; these results are based on {\em conditional} assumptions on the hydrodynamic quantities, see \cite{golse2016, cameron2017landau, henderson2017smoothing,  chen2009smoothing, liu2014regularization}.
The situation for Vlasov-Poisson-Landau is less well-studied; see \cite{Guo2012,strain2013vlasov,chaturvedi2021vlasov} for results near global Maxwellians and \cite{duan2020vlasov} for results near some local Maxwellian data. 

The available literature on the homogeneous version of (\ref{e:mainL}) for very soft potentials is larger. In \cite{arsenev-peskov}, \cite{villani1996global}, \cite{alexandre2004landau} and, later, in \cite{desvillettes2015landau}, the authors show global existence of weak solutions. Recently, it was proven that for short time weak solutions become instantaneously regular and smooth, see \cite{silvestre2015landau} and \cite{gualdani2017landau}. The question of whether they stay smooth for all time or become unbounded after a finite time is, however, also in this case, still open. Recent research has also produced several conditional results. This includes uniqueness results in \cite{Fournier2010} for solutions that belong to the space $L^1(0,T,L^\infty(\mathbb{R}^3))$ and in \cite{ChGu20} for solutions in $L^\infty(0,T,L^p(\mathbb{R}^3))$ with $p>\frac{3}{2}$; and regularity results for solutions in  $L^\infty(0,T,L^p(\mathbb{R}^d))$ with $p>\frac{d}{2}$ \cite{silvestre2015landau} \cite{gualdani2017landau}. We also mention the long time asymptotic results for  weak solutions from \cite{CM2017verysoft} and \cite{CDH15}.

In the very recent manuscript \cite{Desvillettes-He-Jiang-2021} the authors studied behavior of solutions in the space $L^\infty(0,T,\dot{H}(\mathbb{R}{^3}))$. They show that for general initial data there exists a time $T^*$ after which the weak solution belongs to $L^\infty((T^*, +\infty), {H^1}(\mathbb{R}{^3}))$. This result is in accordance with the one  in \cite{GGIV2019partial}; in \cite{GGIV2019partial} the authors  showed that the set of singular times for weak solutions has Hausdorff measure at most $\frac{1}{2}$. On the other hand, global existence of bounded smooth solutions has been shown for an {\em{isotropic}} modification of the Landau  equation $\partial_t f= tr(\bar a^f  \Delta f) + f^2$ in \cite{gualdani2014radial}.

For the non-cutoff Boltzmann equation, existence theory is in a roughly similar stage as the Landau equation. Global existence is known for initial data that is close to equilibrium \cite{gressman2011boltzmann}. %( see also \cite{amuxy2011global, alexandre2012global} . 
In the space homogeneous case, solutions are known to exist globally when $\gamma +2s\geq 0$ \cite{he2012homogeneous-boltzmann}. (The parameter $s$ will be defined in Section \ref{s:boltzmann} below.) See also 
\cite{lu2012measure, morimoto2016measure} for global existence of measure-valued solutions in the homogeneous setting, which regularize in some cases. Short-time existence for the inhomogeneous equation was established in various regimes in, e.g. \cite{amuxy2010regularizing, amuxy2011bounded, HST2019boltzmann}. There is also a program of conditional regularity (see \cite{silvestre2016boltzmann,imbert2016weak, imbert2018decay, imbert2019smooth}) that gives $C^\infty$ smoothness in the case $\gamma + 2s \in [0,2]$, provided the mass, energy, and entropy densities remain under control.

%\textcolor{red}{[Review of self-similar singularity literature: Type I and II self-similar blow-up in semilinear heat equation and Keller-Segel. then results and conjectures about other equations with a degenerate scaling symmetry (what is the right term?), specifically Vlasov (in particular, note the Type I blow-up of relativistic Vlasov by Raphael etc), Burgers equation (I think Masmoudi, Ghoul, and collaborators have a paper on this), incompressible Euler (numerical evidence of Hou/Luo, singularity results of Tarek etc), De Grigorio (will have to check state of the art, Tom Hou has some results), CKY model for Euler blow-up. However, one huuuge difference that Landau has over these other models with messed up scalings is the presence of (hypoelliptic) parabolic smoothing. for this reason, it seems unlikely that there could exist low-regularity singularities like in Burgers and Euler, however, due to the nonlinearity, it isn't obvious]}
	%\textcolor{red}{[To check: is compressible Euler similar? -- plane shock formation, point shock, implosion shocks etc]}

As the question of whether or not solutions to the Landau and Boltzmann equations exhibit finite-time singularities remains an open challenge, it is natural to narrow down the search to certain kinds of singularities.  Our goal is to investigate, and {\em eliminate the existence of}, one particular breakdown mechanism, which is usually called \emph{approximately self-similar blowup}. 
Self-similar singularities are very common in nonlinear partial differential equations and can come in many different forms; see \cite{eggers2015singularities,barenblatt1996scaling} for many examples and detailed discussions.
Here we consider a singularity to be (approximately) \emph{self-similar} if the solution is of the following form
\begin{align}
f(t,x) = \frac{1}{\mu(t)} g\left(\frac{x}{\lambda(t)} \right) + \mathcal{E}(t,x), 
\end{align}
where $\mathcal{E}$ is some error (possibly zero) which is less singular than the self-similar term and where $\mu(t),\lambda(t)$ are rates such that $\lim_{t \nearrow T} \mu(t) = 0$
and $\lim_{t \nearrow T} \lambda(t) = 0$. The function $g$ is called the ``(inner) profile''. 
In the literature, self-similar singularities are roughly divided into two kinds (terminology dating from at least \cite{BZ72}): Type I self-similar, in which the blow-up rate is determined by dimensional analysis (i.e. the scaling symmetry of the equations); and Type II self-similar, in which the rate is determined also by other additional effects, for example by an eigenvalue problem associated to the inner blow-up profile. 
In this context, two well-studied equations are the semilinear heat equation and the Keller-Segel system. 
The semilinear heat equation, despite its simplicity, displays both types of singularities; see reviews in \cite{matano2004nonexistence,quittner2019superlinear,C17} and the references therein. 
The Keller-Segel equations displays type II self-similar finite time and infinite time singularities \cite{CGMN19,GM16,RS14}, and with nonlinear diffusion, can display type I self-similar singularities \cite{BL09}. 
Another semilinear parabolic PDE studied in this context are the incompressible Navier-Stokes equations; finite-energy type I self-similar solutions were ruled out in \cite{NRS96,Tsai1998}; see also \cite{chae2007,SS09}. Self-similar singularities are also intensely studied in the setting of dispersive equations, such as for example, the nonlinear Schr\"odinger equations \cite{MR05} and wave equations

One significant difference between the Landau and Boltzmann equations and the semilinear equations discussed above (and many quasilinear problems too) is a two-parameter scaling symmetry. 
That is, if $f$ is a solution to either (\ref{e:mainL}), or (\ref{e:mainB}), then for any $\alpha \in \R$ and $\lambda>0$, so is 
\[f_{\lambda,\alpha}(t,x,v) := \lambda^{\alpha + 3+\gamma} f(\lambda^\alpha t, \lambda^{1+\alpha}x, \lambda v).\]
This provides for a much wider and subtle class of potential type I singularities (and likely type II as well) than equations with a one-parameter scaling symmetry. 
These kinds of two-parameter symmetry groups are common in fluid mechanics and kinetic theory. Some examples include the Burgers equation, which undergoes type I self-similar shock formation \cite{CGM18,eggers2015singularities}, and the isentropic, compressible Euler equations, for which there are many self-similar finite-time singularities, including implosions \cite{MRRS19I,MRRS19II} and shocks \cite{BSV19,buckmaster2019formation,christodoulou2007formation,christodoulou2014compressible}. 
Another example are the incompressible Euler equations, for which the existence or smooth finite-time singular solutions remains open. 
Type I self-similar singularities have been ruled out under a variety of decay and/or integrability conditions on the profile \cite{chae2007,CS13,CW18,ChaeTsai15}, nevertheless, there is strong numerical evidence that type I self-similar singularity formation is possible, at least along the boundary \cite{luo2014potentially}. 
Moreover, in H\"older regularity (as opposed to smooth), there does exist type I self-similar finite-time blow-up solutions \cite{elgindi2019finite,EJ19}. Smooth type I self-similar blow-up solutions have also been constructed for some toy models of the Euler equations, such as the Choi-Kiselev-Yao (CKY) model \cite{hou2015self} and the de Gregorio model \cite{CHD19}. 
Finally, the four-dimensional gravitational Vlasov-Poisson equations have a family of type I self-similar finite-time singularities \cite{LMR08I,LMR08II}. 
One significant difference that Landau and Boltzmann equations (with singular cut-off) from all of the examples just discussed is the presence of hypoelliptic (or parabolic, if homogeneous) smoothing. 
For example, this is likely to rule out the kind of regularity-dependent blow-up dynamics observed in Burgers and Euler \cite{CGM18,elgindi2019finite,EJ19}. 

In light of the rich number and types of blow-up profiles found in similar equations, it makes sense to narrow down the search for potential singularities by eliminating them one at the time. This work can be considered a first study in this direction, endeavoring to rule out as many kinds of Type I singularities as possible. 
Our first main result is summarized in the following statement, which will be presented and discussed in detail in the next section: 

{\bf{Main Theorem Summary}} {\em{Let $\gamma \in [-3,-2]$ and let $f$ be a smooth solutions to \eqref{e:mainL} with mass and kinetic energy locally bounded, namely 
\begin{align*}%\label{e:f-condition}
f \geq 0,\;f \in C^\infty((0,T) \times \R^3_x \times \R^3_v), \quad  \forall R>0, \sup_{0 < t < T} \int_{\abs{x} \leq R}\int \left(1 + \abs{v}^2\right) f \dd v < \infty,
\end{align*}
for any $T>0$. 
Then, if $f$ has the form 
\begin{equation}\label{e:ansatz}
f(t,x,v) = \phi(t,x,v) + \frac 1 {(T-t)^{1+\theta(3+\gamma)}}\, g\left( \frac x {(T-t)^{1+\theta}}, \frac v {(T-t)^{\theta}}\right),
\end{equation}
with $-1 < \theta < 1/2$, $g$ smooth, $\phi$ not too singular as $t \nearrow T$, and $g$ bounded and satisfying mild decay conditions, then we must have $g\equiv 0$. }}\\

{\bf{Corollary.}} {\em{Let $\gamma \in [-3,-2]$ and let $f$ be a smooth solutions to the homogeneous Landau equation 
$$
\partial_t f  =\tr( \bar a^f D_v^2 f) + \bar c^f f.
$$
Then, if $f$ has the form 
\begin{equation*}%\label{e:ansatz_h}
f(t,v) = \phi(t,v) + \frac 1 {(T-t)^{1+\theta(3+\gamma)}}\, g\left(\frac v {(T-t)^{\theta}}\right),
\end{equation*}
with $1/|\gamma| < \theta < 1/2$, $g$ smooth, $\phi$ not too singular as $t \nearrow T$, and $g$ bounded and satisfying mild decay conditions, then we must have $g\equiv 0$. }}\\

In the second part of our manuscript we extend our blow-up analysis to the Vlasov-Poisson-Landau system ($\gamma =-3$):
\begin{equation}\label{e:LCP}
\begin{split}
&\partial_t f + v\cdot \nabla_x f -\nabla_x E \cdot\nabla_v f = Q_L(f,f),\\
& -\Delta_x E = \pm 4\pi \int_{\R^3} f(t,x,v) \dd v,
 \end{split}
\end{equation}
and to the non-cutoff Boltzmann equation: 
\begin{equation}\label{e:mainB}
\partial_t f + v\cdot \nabla_x f = Q_B(f,f) := \int_{\R^3} \int_{\mathbb S^{2}} B(v-v_*,\sigma) [f(v_*')f(v') - f(v_*)f(v)]\dd \sigma \dd v_*,\\
\end{equation}
(See Section \ref{s:boltzmann} for the definitions of $B(v-v_*,\sigma)$, $v'$, and $v_*'$.) For both models we similarly rule out existence of solutions of the form (\ref{e:ansatz}), see Theorem \ref{t:boltzmann} and Theorem \ref{main_VLP}. \\

Let us briefly comment on the admissible values of $\theta$. Define the self-similar variables
\begin{equation*} %\label{e:self}
 y = \frac x {(T-t)^{1+\theta}}, \quad w = \frac v {(T-t)^\theta}.
\end{equation*} 
In these variables, our ansatz becomes %We will think of $g$ as a function of $y$ and $w$.
\begin{equation*}%\label{e:ansatz2}
 f(t,x,v) = \phi\left(t,(T-t)^{1+\theta}y, (T-t)^\theta w\right) + \frac 1 {(T-t)^{1+\theta(3+\gamma)}} g(y,w).
 \end{equation*}
We consider all $\theta$ that satisfy at the same time $1+\theta >0$ and $1+\theta(3+\gamma) > 0$ for all $\gamma \in [-3,-2]$, i.e. we want a solution that forms a singularity a point in space. This implies $\theta >-1$. 
In the case of the homogeneous Landau equations we additionally have the requirement $1/\abs{\gamma} < \theta$ because otherwise, our ansatz violates conservation of mass and is therefore not an admissible solution. %we know in this case that $f \in L^\infty(0,T;L^1)$. 

To motivate the upper bound on $\theta$ that appears in the our results, we recall from \cite{gualdani2017landau, silvestre2015landau}, that if $f$ is a solution to the homogeneous Landau equations which belongs to $L^\infty(0,T,L_v^q(\mathbb{R}^3))$ for some $q > 3/(5+\gamma)$ then $f$ is uniformly bounded. 
Hence, it is natural to require blow up in all of $L_v^q(\mathbb{R}^3))$ with  $ 3/(5+\gamma) < q \le +\infty$ at  $x = 0$. This motivates the requirement $\theta <\frac{1}{2}$, which also appears in the proof in order to control error terms coming from the interaction of $\phi$ and $g$ near the singularity. 

For the Boltzmann equation, we will take the same ansatz, with $\theta > -1$ for the same reasons mentioned above. The upper restriction on $\theta$ that arises from our proof in the Boltzmann case is $1/(2s)$ rather than $1/2$. Note that $2s$ is the order of the diffusion generated by the collision operator, whereas the Landau collision operator gives rise to diffusion of order $2$. For homogeneous Boltzmann, conservation of mass also holds, which rules out values of $\theta$ smaller than $1/|\gamma|$ in our ansatz.

\begin{remark}
Note that $\theta < 0$ and $\theta > 0$ each correspond to very qualitatively different blow-up scenarios.  In $\theta > 0$ the distribution function is forming a singularity at $v=0$; that is, many particles are slowing to a halt near the singularity. For $\theta < 0$, many particles are being accelerated to unbounded velocities near the point of singularity.  Due to the conservation of energy, the latter kind of singularity cannot occur in the homogeneous equations. However there is, a priori, no reason why such a singularity cannot occur in the inhomogeneous equations, such as in (\ref{e:LCP}) or (\ref{e:mainL}). In fact, precisely this kind of approximately Type I self-similar singularity with accelerating particles occurs in the 4-dimensional gravitational Vlasov equation \cite{LMR08I,LMR08II}.  	
\end{remark}

\section{Main results on the Landau equation}\label{Main section}

To properly formulate our results, we need to choose a  proper class of solutions. In that class, we will show that breakdown mechanisms of the form \eqref{e:ansatz} will not occur. 
The conditions we impose on $\phi$ and $g$ in \eqref{e:ansatz} are mild, but somewhat tedious to state. For convenience, by shifting time, we will take $t=0$ to be the potential blowup time, and assume that $f$ is defined for $(t,x,v) \in(-T,0)\times \mathbb R^3 \times \R^3$ for some $T>0$. Hence, we write
\begin{equation}\label{e:ansatzII}
f(t,x,v) = \phi(t,x,v) + \frac 1 {(-t)^{1+\theta(3+\gamma)}}\, g\left( \frac x {(-t)^{1+\theta}}, \frac v {(-t)^{\theta}}\right),
\end{equation}
or 
%ansatz \eqref{e:ansatzII} becomes %We will think of $g$ as a function of $y$ and $w$.
\begin{equation*}%\label{e:ansatz2}
 f(t,x,v) = \phi\left(t,(-t)^{1+\theta}y, (-t)^\theta w\right) + \frac 1 {(-t)^{1+\theta(3+\gamma)}} g(y,w),
 \end{equation*}
 in the self-similar variables
\begin{equation}\label{e:self}
 y := \frac x {(-t)^{1+\theta}}, \quad w := \frac v {(-t)^\theta}.
\end{equation}

The first condition is that the mass and kinetic energy of $f$ are locally bounded,  
\begin{align}\label{e:f-condition}
f \geq 0,  \quad \forall R > 0, \quad \sup_{-T < t < 0}  \int_{\abs{x} < R} \int \left(1 + \abs{v}^2\right) f \dd v \dd x < \infty, 
\end{align}	
in particular, we do not require the solution to decay as $x \to \infty$. Our analysis is, therefore,  valid also for periodic domains and homogeneous solutions (that is, $x$-independent solutions).  

The second condition is that the singularity occurs only at the blow-up space-time point $(t,x)=(0,0)$:
\begin{align}\label{e:singularity}
 f \in C^\infty((-T,0] \times \R^3 \times \R^3 \setminus \set{0} \times \set{0}\times \R^3 ). 
\end{align}

The third condition concerns the inner profile $g$. For all $1 \leq p \leq \infty$, $0\leq j\leq 2$, $0\leq \ell \leq 1$, we require
\begin{eqnarray}\label{e:g-smooth}
 \begin{array}{cc}
              g \in C^\infty(\R^3 \times \R^3), \quad 
              D_w^jD_y^\ell g \in L^{\infty}_{y, \rm loc} L^{p}_{w}.
\end{array} 
\end{eqnarray}

\begin{remark}
	In fact, one can use the slightly weaker condition $g \in L^\infty_{y,loc} L^1_w \cap L^\infty_{y,loc} L^p_w \cap C^\infty$ for some $p > \frac{3}{\gamma + 5}$, however for simplicity of exposition we will use the stronger assumption \ref{e:g-smooth}.  
\end{remark}

%Regarding the behavior of $g$ for large $|y|$, we consider two regimes: $g\in L^1_{y,w}(\R^6)$, and $g=g(w)$ constant in $y$. The second regime includes as a subcase the spatially homogeneous Landau equation, in which $f$, $\phi$, and $g$ do not depend on $x$ at all.

The fourth condition will assure that, near the singularity, the contribution of $\phi$ is small compared to $g$ in the natural self-similar frame. In this regard, the function $\phi$ is allowed to form a singularity at a rate that is `sub-critical' with respect to the scaling. 
First note that 
$$
D_v^jD_x^{\ell} f (v,x,t) = D_v^jD_x^{\ell}  \phi + (-t)^{-1-\theta(3+\gamma)-\ell(1+\theta)-j\theta}D_w^jD_y^{\ell} g(y,w) .
$$
We would like to compare 
$$\sup_{|y|\le R} \| D_v^jD_x^{\ell}  \phi \|_{L^p_v}$$ 
with (using the definition of the self-similar coordinates \eqref{e:self})
$$ (-t)^{-1-\theta(3+\gamma)-\ell(1+\theta)-j\theta}\sup_{|y|\le R} \| D_w^jD_y^{\ell} g \|_{L^p_v} = (-t)^{-1-\theta(3+\gamma)+3\theta /p-\ell(1+\theta)-j\theta}\sup_{|y|\le R} \| D_w^jD_y^{\ell} g \|_{L^p_w}.
$$
 Let's consider first $\ell=j=0$;
 %Since 
%$$
 %\| D_w^jD_y^{\ell} g \|_{L^p_v} = (-t)^{3\theta /p} \| D_w^jD_y^{\ell} g \|_{L^p_w}
%$$
we compare the terms 
$$\sup_{|y|\le R} \|  \phi \|_{L^p_v} \quad \textrm{vs}\quad (-t)^{-1-\theta(3+\gamma)+3\theta /p}\sup_{|y|\le R} \| g \|_{L^p_w}.$$
If $\theta$ and $p$ are such that $-1-\theta(3+\gamma)+3\theta /p <0$,  we enforce $\phi$ to satisfy 
\begin{align}\label{case2}
\lim_{t \to 0} (-t)^{1+\theta(3+\gamma)-3\theta /p} \sup_{|y|\le R} \|  \phi \|_{L^p_v}=0.
\end{align}
In this case, $\sup_{|y|\le 1} \|  f \|_{L^p_v}$ blows up with a rate $(-t)^{-1-\theta(3+\gamma)+3\theta /p}$ dictated by $g$ and the $\phi$ contribution is at least slightly less singular.   
This case happens for all $p\ge 1$ for $\theta < \frac{1}{|\gamma|}$ and for $p>\frac{3\theta}{1+\theta(3+\gamma)}$ if $\theta \ge\frac{1}{|\gamma|}$. Note that this condition with $p=\infty$, implies $g\geq 0$, by taking $t \to 0$.

Generalizing to derivatives, we assume that for all $1 \leq p \leq \infty$ such that $1 + \theta(3+\gamma) - \tfrac{3\theta}{p} \geq 0$, for every $R > 0$, $0\leq i\leq 1$, $0 \leq j \leq 2$, $0 \leq \ell \leq 1$, the function $\phi$ satisfies
\begin{equation} \label{e:lim-phi-t1}
\lim_{t \to 0} (-t)^{1+ \theta(3+\gamma) - \tfrac{3\theta}{p} + i + (1+\theta)\ell + \theta j} \sup_{\abs{y} \leq R} \norm{\partial_t^i D_v^j D_x^\ell \phi(t,(-t)^{1+\theta}y,\cdot)}_{L^p_v} = 0.
\end{equation}
%Finally, we impose that if $1 \leq p < \infty$ and $\theta$ are such that $1 + \theta(3+\gamma) - \tfrac{3\theta}{p} \leq 0$, we assume that for every $0\leq i \leq 1$, $0 \leq j \leq 2$, $0 \leq \ell \leq 1$,
%\begin{align}
%  \sup_{t \in (-T,0)} (-t)^{i+(1+\theta)\ell + \theta j} \sup_{\abs{y} \leq R} \norm{\partial_t^i D_v^j D_x^\ell \phi(t,(-t)^{1+\theta}y,\cdot)}_{L^p_v} < \infty, \label{e:bd-phi-t1}
%\end{align}
%for some $R > 0$. 

Our main result for the Landau equation is summarized in the following theorem:
\begin{theorem}\label{t:landau}
Let $\gamma \in [-3,-2]$, {$-1 < \theta < 1/2$}.  Let $f$ be a smooth solution of the Landau equation \eqref{e:mainL} that satisfies \eqref{e:f-condition} and \eqref{e:singularity}.
Assume that $\phi$ satisfies \eqref{e:lim-phi-t1}. % or \eqref{e:bd-phi-t1}. 
For $g$, assume it satisfies  \eqref{e:g-smooth} and that there exist $h$ and $q$ such that 
\begin{align*}
g(y,w) = q(w) + h(y,w),
\end{align*}
with 
$$(1+ \abs{y} + |w|)h\in L_{y,w}^1(\R^6)\quad \textrm{and} \quad q \in L_w^1(\R^3).$$ % and $h$  and $q$ both satisfy \eqref{e:g-smooth}. \\
Finally,  if $\theta = \pm1/3$ we additionally assume that 
$$(1+ \abs{y}\abs{w}^2 + |w|^3) h \in L^1_{y,w}(\R^6) \quad \textrm{and} \quad (1+ |w|^2) q \in L^1_w(\R^3).$$
	%\item[(b)] if $\theta = 1/3$, then $q$ satisfies the entropy conditions \eqref{e:entropy}.
%\end{enumerate}
Then, for any solution to the Landau equation \eqref{e:mainL} of the form 
\begin{equation*}
f(t,x,v) = \phi(t,x,v) + \frac 1 {(-t)^{1+\theta(3+\gamma)}}\, g\left( \frac x {(-t)^{1+\theta}}, \frac v {(-t)^{\theta}}\right),
\end{equation*}
 we must have $g\equiv 0$ and hence no approximate self-similar singularity of this type can occur.  
\end{theorem}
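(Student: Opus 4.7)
My plan for proving Theorem \ref{t:landau} is to exploit the fact that $1, v_i, |v|^2$ are collision invariants of the Landau operator, so that $f$ automatically satisfies the exact local conservation laws
\begin{align*}
\partial_t m_f^{(k)} + \nabla_x \cdot n_f^{(k)} = 0, \qquad m_f^{(k)} := \int \psi_k(v) f\,dv, \qquad n_f^{(k)} := \int v\psi_k(v) f\,dv,
\end{align*}
for $\psi_0 = 1$, $\psi_1 = v$, $\psi_2 = |v|^2$. Note that $Q_L(f,f)$ drops out of the argument entirely, so cross-terms in $Q_L(\phi+f_s,\phi+f_s)$ pose no issue. Substituting $f = \phi + f_s$ with $f_s(t,x,v) = (-t)^{-1-\theta(3+\gamma)}g(y,w)$ and carrying out the self-similar scaling, the $f_s$-contribution to the $k$-th law factors as $(-t)^{-2+\theta(|\gamma|+k)}\mathcal{T}_k[g](y)$ with
\begin{align*}
\mathcal{T}_k[g](y) := (1-\theta(|\gamma|+k))\,m_g^{(k)}(y) + (1+\theta)\,y\cdot\nabla_y m_g^{(k)}(y) + \nabla_y\cdot n_g^{(k)}(y),
\end{align*}
where $m_g^{(k)}, n_g^{(k)}$ are the corresponding $w$-moments of $g$. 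The conservation law rearranges to
\begin{align*}
\mathcal{T}_k[g](y) = -(-t)^{2-\theta(|\gamma|+k)}\bigl(\partial_t m_\phi^{(k)} + \nabla_x\cdot n_\phi^{(k)}\bigr)\bigl(t,(-t)^{1+\theta}y\bigr),
\end{align*}
whose left side depends only on $y$. The hypothesis \eqref{e:lim-phi-t1} with an appropriate $p$, combined with the $(1+|v|^2)$-moment bound on $\phi = f - f_s$ inherited from \eqref{e:f-condition}, forces the right side to $0$ uniformly on compact sets of $y$ as $t \to 0^-$, yielding the pointwise profile identities $\mathcal{T}_k[g] \equiv 0$ on $\R^3_y$ for $k = 0, 1, 2$.

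Next I use the decomposition $g = q(w) + h(y,w)$ and treat each piece separately. For $q$: since its moments are $y$-constants, an averaging argument over $|y| \to \infty$ (justified by the $L^1_y$-integrability of $\rho_h, J_h$ that follows from the weighted decay of $h$) removes the $h$- and gradient-contributions from $\mathcal{T}_k[g] = 0$, leaving $(1-\theta(|\gamma|+k))\,m_q^{(k)} = 0$. For generic $\theta$ the $k=0$ identity $(1-\theta|\gamma|)\rho_q = 0$ forces $q \equiv 0$ by non-negativity of $g$; at the degenerate value $\theta = 1/|\gamma|$ (equal to $1/3$ for $\gamma = -3$) the $k=2$ identity takes over, with coefficient $1-\theta(|\gamma|+2) \neq 0$, and the extra hypothesis $(1+|w|^2)q \in L^1_w$ is exactly what makes the energy moment $\mathcal{E}_q = \int|w|^2 q\,dw$ well-defined. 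For $h$: once $q$ is eliminated, I test $\mathcal{T}_0[h] = 0$ against a cutoff $\chi_R(y) = \chi(y/R)$ and integrate by parts to move derivatives onto $\chi_R$; as $R \to \infty$, the boundary terms vanish by $(1+|y|+|w|)h \in L^1_{y,w}$, and the identity collapses to $-(2+\theta(|\gamma|+3))\iint h\,dw\,dy = 0$. For generic $\theta$ this yields $h \equiv 0$; at $\theta = -2/(|\gamma|+3)$ (equal to $-1/3$ for $\gamma = -3$) the coefficient vanishes and I repeat the argument with $\mathcal{T}_2$, whose integrated coefficient $-(2+\theta(|\gamma|+5))$ is then nonzero, and the additional weight $(1+|y||w|^2+|w|^3)h \in L^1$ provides exactly the moments and boundary decay required for the energy identity and its integration by parts.

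The principal obstacle I anticipate is the limit $t \to 0^-$ in the first paragraph: for $k = 0$, \eqref{e:lim-phi-t1} directly controls $\|\partial_t\phi\|_{L^1_v}$ when $\theta \leq 1/|\gamma|$, but for larger $\theta$ or higher $k \in \{1,2\}$, the moment quantities $\int v^j \partial_t\phi\,dv$ are not pure $L^p_v$ norms and must be interpolated against the $(1+|v|^2)$-moment control on $\phi$ extracted from \eqref{e:f-condition}. The upper bound $\theta < 1/2$ is exactly what keeps the prefactor exponent $2-\theta(|\gamma|+k)$ strictly positive across $|\gamma| \in [2,3]$ and $k \leq 2$, making these error estimates close. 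Once the identities $\mathcal{T}_k[g] \equiv 0$ are secured, the remainder—handling $q$ via averages at infinity, $h$ via the cutoff argument, and peeling off the degenerate values $\theta = \pm 1/3$ with the energy identity—is essentially algebraic bookkeeping.
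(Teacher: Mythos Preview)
Your overall architecture is genuinely different from the paper's. The paper first derives the pointwise profile PDE
\[
(1+\theta(3+\gamma))\,g + (1+\theta)y\cdot\nabla_y g + \theta w\cdot\nabla_w g + w\cdot\nabla_y g = Q_{L,w}(g,g)
\]
by showing that all cross terms in $Q_L(\phi+f_s,\phi+f_s)$ and the $\phi$-transport terms vanish in $L^\infty_{\mathrm{loc}}(y,w)$ as $t\to 0$ (Lemmas~\ref{lemma_E_1}--\ref{lemma_E_2}); only \emph{after} the profile PDE is in hand does it integrate against the collision invariants, using Lemma~\ref{l:invariants} to kill $Q_{L,w}(g,g)$. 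Your route---work directly with the macroscopic conservation laws so that $Q_L$ never appears---is attractive precisely because it would bypass those cross-term estimates.

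However, the step you flag as the ``principal obstacle'' is a genuine gap, and the fix you propose does not close it. The identity
\[
\mathcal{T}_k[g](y) \;=\; -(-t)^{\,2-\theta(|\gamma|+k)}\bigl(\partial_t m_\phi^{(k)} + \nabla_x\cdot n_\phi^{(k)}\bigr)\bigl(t,(-t)^{1+\theta}y\bigr)
\]
requires you to show that the right-hand side tends to $0$. The quantities $\partial_t m_\phi^{(k)}=\int\psi_k\,\partial_t\phi\,dv$ and $\nabla_x\cdot n_\phi^{(k)}=\int v\psi_k\cdot\nabla_x\phi\,dv$ are $v$-weighted integrals of \emph{derivatives} of $\phi$. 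Assumption~\eqref{e:lim-phi-t1} only supplies $L^p_v$ control on $\partial_t\phi$ and $\nabla_x\phi$ for $p$ with $1+\theta(3+\gamma)-3\theta/p\ge 0$; when $\theta>1/|\gamma|$ this excludes $p=1$, and in any case the first-moment quantities (e.g.\ $\int v\cdot\nabla_x\phi\,dv$) are not bounded by any pure $\|\nabla_x\phi\|_{L^p_v}$ without large-$|v|$ decay. Your proposed interpolation against \eqref{e:f-condition} does not help: that hypothesis bounds $\int_{|x|<R}\int(1+|v|^2)\,f\,dv\,dx$, i.e.\ it controls moments of $\phi$ itself (and only after integrating in $x$), not moments of $\partial_t\phi$ or $\nabla_x\phi$. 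There is no equation for $\phi$ alone that converts this into derivative control, and rewriting via the conservation law for $f$ is circular. The paper's pointwise route avoids exactly this difficulty: in Lemma~\ref{lemma_E_2} the term $(-t)^{2+\theta(3+\gamma)}\partial_t\phi$ is estimated in $L^\infty_v$ (always admissible in \eqref{e:lim-phi-t1}), no $v$-moments of derivatives are ever needed, and integration in $w$ is performed only on the limiting profile equation, where Lemma~\ref{l:invariants} handles $Q_{L,w}(g,g)$.

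A secondary point: your degenerate values $\theta=1/|\gamma|$ and $\theta=-2/(|\gamma|+3)$ are $\gamma$-dependent and coincide with $\pm 1/3$ only for $\gamma=-3$; the extra moment hypotheses in the theorem statement are tied to $\theta=\pm 1/3$, so for $\gamma\in(-3,-2]$ your second-moment argument would need assumptions not actually provided.
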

\begin{remark}
Note that the inhomogeneous problem could have a self-similar profile with $q \neq 0$. 
In other equations, there are type I self-similar singularities with inner profiles that do not decay at infinity (although the solution does), such as in the semilinear heat equation \cite{GigaKohn85}, and even profiles grow at infinity, such as in shock formation in Burgers \cite{CGM18,eggers2015singularities} and in the CKY model \cite{hou2015self}. Numerical evidence suggests that such singularities exist also in the incompressible Euler equations \cite{luo2014potentially}. 
At the current moment, we do not know how to classify potential singularities with inner profiles that grow at infinity. 
\end{remark}
\begin{remark}
If one knows a priori that $g \in L^{1}_{y,v}$, then it suffices to use $(1 + |w|)g\in L_{y,w}^1(\R^6)$ (and $(1+|w|^3) h$ if $\theta = \pm 1/3$).
\end{remark}

Next, we specialize our analysis to the homogeneous Landau equation. Our next theorem shows essentially that if any solution to the homogeneous Landau equation has a singularity, such singularity is  either (i) not of Type I self-similar, or (ii) is of Type I self-similar with a profile $g \not\in L^1(\R^3)$.  

\begin{theorem}\label{theom:Landau_Hom}
 Let $\gamma \in [-3,-2)$ and $1/\abs{\gamma} \leq \theta < 1/2$.  Assume that $f = f(v,t)$ has finite mass and second moment and satisfies 
\begin{align*}%\label{e:singularity}
 f \in C^\infty((-T,0) \times  \R^3), f \in C^\infty((-T,0] \times \R^3 ).
\end{align*}%\eqref{e:singularity}.
Assume that $\phi$ satisfies \eqref{e:lim-phi-t1}, and that $g = g(v)$ is such that 
$$
g\in C^{\infty}(\R^3), \quad g \in L^1_w, %, \quad D^j_{w,loc} g \in L^p_w,
$$
%for all $p\ge 1$ and $0\le j \le 2$. 
If $\theta = 1/3$, then additionally assume that $g$ satisfies $ (1+|w|^2)g \in L^1_w$. 

Then, for any solution to the homogeneous Landau equation of the form 
\begin{equation*}
f(t,v) = \phi(t,v) + \frac 1 {(-t)^{1+\theta(3+\gamma)}}\, g\left( \frac v {(-t)^{\theta}}\right),
\end{equation*}
we must have $g \equiv 0$, and hence no approximate self-similar singularity of this type can occur. 
\end{theorem}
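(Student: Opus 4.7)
The plan is to rescale to self-similar variables, derive a limiting profile equation for $g$, and then use moment identities to force $g\equiv 0$.

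Set $\tau := -t$, $s := -\log\tau$, $w := v\tau^{-\theta}$, $a := 1 + \theta(3+\gamma)$, and $F(w,s) := \tau^{a} f(v,t)$. The two-parameter scale invariance of $Q_L$ yields $Q_L(f,f)(v,t) = \tau^{-a-1} Q_L(F,F)(w,s)$, so $\partial_t f = Q_L(f,f)$ becomes
\[
\partial_s F + a F + \theta\, w\cdot\nabla_w F = Q_L(F,F).
\]
The ansatz reads $F(w,s) = g(w) + R(w,s)$ with $R(w,s) := \tau^a\phi(w\tau^\theta,-\tau)$. Hypothesis \eqref{e:lim-phi-t1} (with $\ell = 0$) converts, after changing variables $v\mapsto w$, into $\|R(\cdot,s)\|_{L^p_w} = \tau^{a-3\theta/p}\|\phi(\cdot,t)\|_{L^p_v}\to 0$ as $s \to \infty$, valid for every $p\in[1,\infty]$ with $a - 3\theta/p \ge 0$; analogous estimates hold for $\partial_s^i D_w^j R$ with $i\le 1$, $j\le 2$. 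The $p=\infty$ case yields $R\to 0$ in $L^\infty_{w,\mathrm{loc}}$, and combined with $F\ge 0$ this forces $g \ge 0$.

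To derive the profile equation, test against a smooth, compactly supported $\eta(w)$ and integrate by parts in $w$:
\[
\partial_s \int F\eta\,dw + (a-3\theta)\int F\eta\,dw - \theta\int F(w\cdot\nabla\eta)\,dw = \int Q_L(F,F)\eta\,dw.
\]
Integrate this over $s\in[s_0,S]$, divide by $S - s_0$, and send $S\to \infty$. The boundary term vanishes since $\int F\eta\,dw$ is uniformly bounded in $s$. Using the $L^p_w$-decay of $R$ (and its first two $w$-derivatives) together with the smoothness of $g$ from \eqref{e:g-smooth} and standard bilinear estimates for the diffusion and reaction coefficients of $Q_L$, one passes $\int Q_L(F,F)\eta\,dw \to \int Q_L(g,g)\eta\,dw$ in the limit. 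This is the most delicate step: splitting $F = g + R$ produces cross terms $\int Q_L^{\mathrm{sym}}(g,R)\eta\,dw$ and $\int Q_L(R,R)\eta\,dw$ that must vanish, and for $\gamma = -3$ the Coulomb kernel requires additionally the $L^\infty$-decay of $R$ beyond the $L^p$-decay used for $\gamma \in (-3,-2)$. The limiting identity
\[
(a - 3\theta)\int g\eta\,dw - \theta\int g(w\cdot\nabla\eta)\,dw = \int Q_L(g,g)\eta\,dw
\]
is the distributional form of $a g + \theta\,w\cdot\nabla g = Q_L(g,g)$.

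Finally, I extract moment identities. Let $\chi_R$ be a smooth radial cutoff with $\chi_R \equiv 1$ on $\{|w|\le R\}$, supported in $\{|w|\le 2R\}$. Testing against $\eta = \chi_R$ and letting $R\to\infty$, $g\in L^1$, the uniform bound $|w\cdot\nabla\chi_R|\le C$, and mass conservation $\int Q_L(g,g)\chi_R\,dw\to 0$ give
\[
(1-\theta|\gamma|)\int g\,dw = 0.
\]
If $\theta > 1/|\gamma|$ the prefactor is nonzero, so $\int g = 0$ and hence $g\equiv 0$ by $g\ge 0$ and smoothness. In the borderline case $\theta = 1/|\gamma|$ (which corresponds to $\theta = 1/3$ when $\gamma = -3$), the extra assumption $|w|^2 g\in L^1$ permits taking $\eta = |w|^2 \chi_R$: using $w\cdot\nabla(|w|^2\chi_R) = 2|w|^2\chi_R + |w|^2 w\cdot\nabla\chi_R$, the decay of $|w|^2 g$, and energy conservation $\int Q_L(g,g)|w|^2\,dw = 0$, the limit $R\to\infty$ produces $(a-5\theta)\int |w|^2 g\,dw = 0$. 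Since $a - 5\theta = -2/|\gamma| \ne 0$ at $\theta = 1/|\gamma|$, we conclude $\int|w|^2 g = 0$ and hence $g\equiv 0$.
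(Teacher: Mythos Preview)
Your approach mirrors the paper's: pass to self-similar variables, show that the $\phi$-contributions drop out so that $g$ satisfies a profile equation, then kill $g$ via the mass (and, in the borderline case, energy) invariants of $Q_L$. The vanishing of the cross terms---your ``most delicate step''---is where the upper restriction $\theta < 1/2$ enters; the paper carries this out in detail in Lemmas~\ref{lemma_E_1} and~\ref{lemma_E_2}, and justifies the mass and energy invariants of $Q_L$ for $g$ in Lemma~\ref{l:invariants}.

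Two further comments. First, the time-averaging in $s$ is an unnecessary detour: since you already record that $\partial_s R \to 0$ locally uniformly (the $i=1$ case of \eqref{e:lim-phi-t1}), testing the rescaled equation against a fixed $\eta$ and sending $s\to\infty$ directly yields the profile identity, exactly as the paper obtains it by letting $t\to 0$ in \eqref{e:expansion-L}.

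Second, and more interestingly, your coefficient $a = 1+\theta(3+\gamma)$ in front of $g$ in the profile equation is correct, whereas the paper writes simply $g$ (see \eqref{e:yw}); the factor $1+\theta(3+\gamma)$ appears to have been dropped when differentiating $(-t)^{-(1+\theta(3+\gamma))}$. Consequently the mass identity actually degenerates at $\theta = 1/|\gamma|$, as you find, rather than at $\theta = 1/3$. For $\gamma = -3$ these coincide and everything lines up with the statement. For $\gamma\in(-3,-2)$, however, the true borderline lies at $\theta = 1/|\gamma| > 1/3$, where the theorem as stated does \emph{not} supply the second-moment hypothesis your energy argument requires. So at that single endpoint neither your argument nor the paper's is complete; the natural repair is to impose $(1+|w|^2)g\in L^1_w$ at $\theta = 1/|\gamma|$ rather than at $\theta = 1/3$.
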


The outline of the paper is as follows: in Section \ref{s:landau} we prove Theorem \ref{t:landau} and Theorem \ref{theom:Landau_Hom}, in Section \ref{s:landau-poisson} we investigate the Vlasov-Landau-Poisson system and in Section \ref{s:boltzmann} the Boltzmann equation. 

\subsection{Notation} 
We will employ the notation $\langle \cdot \rangle = \sqrt{1+|\cdot|^2}$ throughout. When we say $t\to 0$, we always mean that $t$ increases to $0$ through negative values. We will write $A\lesssim B$ when $A\leq CB$ for some universal constant $C$. When integrals appear with no domain, it is assumed that the domain of integration is $\R^3$. Similarly, norms such as $\|\cdot\|_{L^p}$ are over $\R^3$, unless stated otherwise. %For any matrix $A$, we will write $|A| = \sup_{|x|=1} |Ax|$.

\section{Proof of Theorem \ref{t:landau}} \label{s:landau}

%Under our assumptions, the formal identity
%\begin{equation}
%\int_{\R^3} Q(f,f) \dd v = 0
%\end{equation}
%is valid for both $Q = Q_B$ and $Q=Q_L$. This implies that the evolution of \eqref{e:mainB} or \eqref{e:mainL} conserves the total mass $\|f(t,\cdot,\cdot)\|_{L^1_{x,v}}$ (if these quantities are initially finite).  

\subsection{Preliminary lemmas}
First, we recall important global estimates on the coefficients in \eqref{e:mainL}. The proof is standard, but we include a sketch for the readers' convenience. 
\begin{lemma}\label{l:coeffs}
Let $\gamma \in [-3,-2]$. With $\bar a^h$ and $\bar c^h$ defined as in \eqref{e:coeffs}, for any $1 \leq p < \frac{-3}{\gamma + 2}$, there exists $C > 0$ such that %for any small $\delta >0$, $\exists \zeta,\eta,\beta$ \textcolor{red}{(easily computed)} such that 
\begin{align}
|\bar a^h(v)| &\leq C\|h\|_{L^1(\R^3)}^{1 + \frac{p}{3}(\gamma+2)}\|h\|_{L^{\frac{p}{p-1}}(\R^3)}^{-(\gamma+2)p/3}. \label{ineq:a1hi} 
\end{align}
Moreover, for any $1 \leq q < \frac{3}{\gamma + 5}$, there exists $C > 0$ such that 
\begin{align}
|\bar a^h(v)| &\leq C\|h\|_{L^q(\R^3)}^{\frac{q}{3}(\gamma+5)}\|h\|_{L^{\infty}(\R^3)}^{1-(\gamma+5)\frac{q}{3}}.\label{ineq:aloinf}
\end{align}
For any $1 \leq p < \frac{-3}{\gamma + 1}$, there exists $C > 0$ such that
\begin{align}
|\partial_{v_i} \bar a^h (v)| &\leq  C\|h\|_{L^1(\R^3)}^{1 + \frac{p}{3}(\gamma +1)}\|h\|_{L^{\frac{p}{p-1}}(\R^3)}^{-(\gamma+1)p/3}. \label{ineq:agrad}  % &\mbox{if } p> 3/(4+\gamma).
\end{align}
Finally, for any $1 \leq p < \frac{-3}{\gamma}$ and $\gamma \in (-3,-2]$
\begin{align}
\abs{\bar c^h(v)} &\leq  C\|h\|_{L^1(\R^3)}^{1 + \frac{p}{3}\gamma} \|h\|_{L^{\frac{p}{p-1}}(\R^3)}^{-\gamma p/3}, \qquad  \gamma \in (-3,-2]. \label{ineq:c}
\end{align}
\end{lemma}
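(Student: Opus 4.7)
My plan is to reduce all four estimates to a single dyadic splitting argument applied to integrals of the form
\[
I_\alpha(v) := \int_{\R^3} |v_*|^\alpha\, h(v - v_*)\dd v_*,
\]
for $\alpha \in \{\gamma, \gamma+1, \gamma+2\} \subset [-3,-1]$. Since $\Pi$ is an orthogonal projector, $|\Pi(v_*)| \leq 1$, so directly $|\bar a^h(v)| \lesssim I_{\gamma+2}(v)$ and $|\bar c^h(v)| \lesssim I_{\gamma}(v)$. For the derivative term, a short pointwise computation using $\Pi(z) = \Id - z\otimes z/|z|^2$ shows $|\partial_z[\Pi(z)|z|^{\gamma+2}]| \lesssim |z|^{\gamma+1}$ --- the $|z|^{-1}$ singularity coming from $\nabla \Pi$ is cancelled by one power of $|z|$ in $|z|^{\gamma+2}$ --- so moving the derivative onto the kernel via a change of variables yields $|\partial_{v_i} \bar a^h(v)| \lesssim I_{\gamma+1}(v)$.

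For \eqref{ineq:a1hi}, \eqref{ineq:agrad}, and \eqref{ineq:c}, fix $R>0$ and decompose $I_\alpha = \int_{|v_*|\leq R} + \int_{|v_*|>R}$. On the inner region, H\"older's inequality with exponents $p$ and $p/(p-1)$ yields
\[
\int_{|v_*|\leq R} |v_*|^\alpha h(v-v_*)\dd v_* \lesssim \|h\|_{L^{p/(p-1)}}\, R^{\alpha + 3/p},
\]
which requires $\alpha p + 3 > 0$ and thereby accounts precisely for the restrictions $p < -3/(\gamma+2)$, $p < -3/(\gamma+1)$, and $p < -3/\gamma$ respectively. On the outer region, since $\alpha < 0$, the factor $|v_*|^\alpha \leq R^\alpha$ pulls out, giving the cruder bound $R^\alpha \|h\|_{L^1}$. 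Balancing the two contributions by choosing $R = (\|h\|_{L^1}/\|h\|_{L^{p/(p-1)}})^{p/3}$ reproduces the three stated inequalities after elementary algebra.

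For \eqref{ineq:aloinf}, the same splitting is used but the roles of the two norms are swapped. On $\{|v_*| \leq R\}$ we bound $h$ by $\|h\|_{L^\infty}$ and integrate $|v_*|^{\gamma+2}$ (integrable near zero since $\gamma+5 > 0$), obtaining $\|h\|_{L^\infty}\, R^{\gamma+5}$. On $\{|v_*| > R\}$ we apply H\"older with exponents $q$ and $q/(q-1)$; convergence at infinity demands $(\gamma+2)q/(q-1) < -3$, equivalently $q < 3/(\gamma+5)$, and a direct computation gives $\|h\|_{L^q}\, R^{\gamma+5-3/q}$. Optimizing with $R = (\|h\|_{L^q}/\|h\|_{L^\infty})^{q/3}$ yields the claim.

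The only mildly delicate step is the gradient estimate, where one has to verify that the $|z|^{-1}$ singularity produced by differentiating $\Pi$ is absorbed by the $|z|^{\gamma+2}$ weight so that the effective integrand behaves like $|z|^{\gamma+1}$; once this pointwise bound is established, the argument is identical to the one for \eqref{ineq:a1hi} with $\alpha = \gamma+1$ in place of $\alpha = \gamma+2$. All remaining work amounts to verifying that the balanced exponents match the claimed powers, which is straightforward arithmetic.
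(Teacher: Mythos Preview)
Your proof is correct and follows essentially the same route as the paper: split the convolution at radius $R$, apply H\"older on each piece, optimize in $R$, and for the gradient term first move the derivative onto the kernel using $|\partial_{z}(\Pi(z)|z|^{\gamma+2})| \lesssim |z|^{\gamma+1}$. The only cosmetic difference is that the paper states the intermediate inequality with two general exponents $p,r$ before specializing, whereas you go directly to the endpoint pairs $(L^1,L^{p'})$ and $(L^q,L^\infty)$ needed for the four stated bounds.
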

\begin{proof}
For $s\in (-3,0)$, splitting the integral into $\abs{v_*} \leq R$ and $\abs{v_*} > R$, applying H\"older's inequality on each, we have for $1 \leq p < 3/(3-s) < r \leq \infty$
\begin{align}
\left| \int_{\R^3} h(v-v_*) \abs{v_*}^s \dd v_* \right| \lesssim R^{-s +3 - 3/p} \norm{h}_{L^p} + R^{-s + 3 - 3/r}\norm{h}_{L^r}. \label{ineq:absvstarin}
\end{align}  
Optimizing in $R$ gives the estimates \eqref{ineq:a1hi}, \eqref{ineq:aloinf}, and \eqref{ineq:c} (using also $|\Pi(v_*)| \leq 1$). 
For \eqref{ineq:agrad}, first one integrates by parts and uses $|\partial_{v_i}(\Pi(v_*) |v_*|^{\gamma+2})|\lesssim |v_*|^{\gamma+1}$ before applying again \eqref{ineq:absvstarin}. 
\end{proof}

The next lemma ensures that the formal identity $\int_{\R^3}Q_{L}(g,g) (1 + |w|^2) \dd w = 0$ and entropy dissipation inequality $\int_{\R^3}\log g Q_{L}(g,g) \dd w \leq 0$ are valid under our assumptions on $g$. 

%\textcolor{red}{[By the way, using that the entropy dissipation should be striclty negative unless you are exactly Maxwellian, there may be some way to treat unbounded profiles. There might be a little graduate student project if we think the community would be sufficiently interested]}
%{\color{red}{Maria: should $g$ be defined almost everywhere in $y$ to make the following lemma meaningful ? }}
\begin{lemma}\label{l:invariants}
Let $\chi \in C^\infty(B(0,2))$ be a smooth cut-off function, such that $\chi(|x|) = 1$ for $\abs{x} \leq 1$. With $g \in L_w^1 \cap L_w^\infty(\R^3) \cap C^\infty$, we have
\begin{itemize}
	\item %With $g \in L_w^1 \cap L_w^\infty(\R^3)$ satisfying \eqref{e:g-smooth}, we have  
	$$\lim_{R \to \infty}\int_{\R^3} \chi\left( \frac{w}{R} \right) Q_{L}(g,g) \dd w = 0.$$
	\item If, in addition, $g$ satisfies $|w|^2 g   \in L_w^1 $ we have
	$$\lim_{R \to \infty}\int_{\R^3} \chi\left( \frac{w}{R} \right)|w|^2 Q_{L}(g,g) \dd w = 0.$$
	\item If, in addition, $g$ satisfies 
	\begin{eqnarray}\label{e:entropy}
% \begin{split}
&  g \log g \in L_w^1,  \quad \nabla \sqrt{g} \in L_w^2, %\int_{\R^3} |\nabla \sqrt g(y,w)|^2 \dd w < \infty , \\
% \end{split}
 \end{eqnarray}
then we have
	$$\lim_{R \to \infty}\int_{\R^3} \chi\left( \frac{w}{R} \right) \log g Q_{L}(g,g) \dd w \leq 0.$$
\end{itemize}
\end{lemma}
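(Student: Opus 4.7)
The plan is to test each claim against a common weak formulation of $Q_L$ with $\chi_R(\cdot):=\chi(\cdot/R)$ as the cutoff and to control the resulting error contributions. For parts (i) and (ii) I would use the identity
\begin{equation*}
\int \varphi\, Q_L(g,g)\,\dd w = \int (\partial_{ij}\varphi)\, \bar a^g_{ij}\, g\,\dd w + 2\int (\partial_i\varphi)\, \bar b^g_i\, g\,\dd w, \quad \bar b^g_i := \partial_j \bar a^g_{ij},
\end{equation*}
obtained from $Q_L = \bar a^g_{ij}\partial_{ij}g + \bar c^g g$ by integrating the second-order term by parts twice and using the direct computation $\partial_i \bar b^g_i = -\bar c^g$; crucially no derivatives of $g$ appear on the right. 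Taking $\varphi = \chi_R$, both integrals are supported on the shell $R\lesssim|w|\lesssim 2R$ with $|\partial\chi_R|\lesssim 1/R$ and $|\partial^2\chi_R|\lesssim 1/R^2$. Combined with the uniform bounds on $\bar a^g, \bar b^g$ from Lemma~\ref{l:coeffs} and $g\in L^1$, both are $O(R^{-1})$, proving (i).

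For (ii), take $\varphi = \chi_R|w|^2$ and expand via Leibniz. The terms containing $\chi_R$ but no cutoff-derivative reduce to $2\int\chi_R\,\tr(\bar a^g)\,g\,\dd w + 4\int\chi_R\, w\cdot\bar b^g\, g\,\dd w$; by dominated convergence (using $|w|g\in L^1$, which follows by interpolation from $g,\,|w|^2 g\in L^1$) this converges to $2\int\tr(\bar a^g)g\,\dd w + 4\int w\cdot\bar b^g\, g\,\dd w$, and a direct double-integral computation unfolding the convolutions in $\bar a^g, \bar b^g$ and symmetrizing $w\leftrightarrow w_*$ shows this limit equals zero, encoding conservation of kinetic energy. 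The remaining shell terms contain $\partial\chi_R$ or $\partial^2\chi_R$ multiplied by $|w|$ or $|w|^2$, giving bounded factors of $O(1)$ or $O(R)$ that are absorbed by the tail smallness $\int_{|w|\geq R}(g + |w|g)\,\dd w \to 0$.

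Part (iii) is the delicate one. Here I would switch to the symmetric dissipation form
\begin{equation*}
\int \varphi\, Q_L(g,g)\,\dd w = -\tfrac{1}{2}\!\int\!\int (\nabla\varphi - \nabla_*\varphi_*)\cdot a(w-w_*)\, gg_*\,(u-u_*)\,\dd w\,\dd w_*, \quad u:=\nabla\log g,
\end{equation*}
applied to $\varphi = \chi_R\log g$. Decomposing $\nabla(\chi_R\log g) = \chi_R u + (\nabla\chi_R)\log g$ and $\chi_R u - \chi_{R,*}u_* = \chi_R(u-u_*) + (\chi_R - \chi_{R,*})u_*$ splits the right-hand side into a main term $-\tfrac12\int\!\int \chi_R\, gg_*\,(u-u_*)\cdot a(u-u_*)\,\dd w\,\dd w_*$, which is nonpositive and, by monotone convergence as $R\to\infty$, tends to $-D(g)\leq 0$; finiteness of the entropy dissipation $D(g)$ follows from $|a|\lesssim|w-w_*|^{\gamma+2}$, Lemma~\ref{l:coeffs}, and $g|u|^2 = 4|\nabla\sqrt g|^2\in L^1$.

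The first error, carrying $(\chi_R-\chi_{R,*})u_*$, is handled by Cauchy-Schwarz against $\sqrt{D(g)}$, with the remaining factor $\int\!\int(\chi_R-\chi_{R,*})^2 \,u_*\cdot a\, u_*\,gg_*$ vanishing by dominated convergence. The second error, $-\int\!\int(\nabla\chi_R)\log g\cdot a\, gg_*(u-u_*)$, reduces via $\int a\, gg_*(u-u_*)\,\dd w_* = \bar a^g\nabla g - g\bar b^g$ to $-\int (\nabla\chi_R)\log g\cdot[\bar a^g\nabla g - g\bar b^g]\,\dd w$; the $\bar b^g$ part vanishes directly using $g\log g \in L^1$ and $\|\bar b^g\|_\infty<\infty$, while the $\bar a^g\nabla g$ part, after $\nabla g = 2\sqrt g\,\nabla\sqrt g$ and Cauchy-Schwarz, reduces to showing $\int|\nabla\chi_R|^2\, g(\log g)^2\,\dd w\to 0$. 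This last step is where I expect the main obstacle: splitting $\log g = \log^+ g - \log^- g$, the positive part is bounded by $\log\|g\|_\infty$ and immediately handled by $g\in L^1$, but the negative part requires careful use of the $R^{-2}$ prefactor against a tail estimate on the $\{g\leq 1\}$ portion of the shell $\{R\leq|w|\leq 2R\}$, where $g(\log g)^2$ is uniformly bounded but not obviously globally integrable.
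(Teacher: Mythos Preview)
Your treatment of parts (i) and (ii) is essentially the paper's proof: the identity $\int\varphi\,Q_L(g,g)\,\dd w = \int(\partial_{ij}\varphi)\bar a^g_{ij}g + 2\int(\partial_i\varphi)\bar b^g_i g$ is exactly what the paper derives by two integrations by parts, and the shell estimates using $|\nabla^j\chi_R|\lesssim R^{-j}$ together with the bounds of Lemma~\ref{l:coeffs} are the same.

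For part (iii) you take a genuinely different route, and the obstacle you flag is real for your method but is completely avoided by the paper's. Instead of passing to the symmetric double-integral form and applying Cauchy--Schwarz, the paper stays in the single-integral setting and \emph{integrates the term $-\int(\nabla\chi_R)\log g\cdot\bar a^g\nabla g$ by parts once more}, moving the $w$-derivative off $g$ and onto $(\nabla\chi_R)\log g\cdot\bar a^g$. This produces only the terms
\[
\int(\nabla^2\chi_R:\bar a^g)(g\log g - g)\,\dd w \quad\text{and}\quad \int(\nabla\chi_R\cdot\bar b^g)(2g\log g - 2g)\,\dd w,
\]
plus the main dissipation term $-\int\chi_R\big[\langle\bar a^g\frac{\nabla g}{\sqrt g},\frac{\nabla g}{\sqrt g}\rangle - g\bar c^g\big]\,\dd w$. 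The cutoff error terms now involve only $g\log g$ and $g$ (both in $L^1$) against bounded coefficients times $R^{-1}$ or $R^{-2}$, so they vanish immediately; no factor of $(\log g)^2$ ever appears. In short, the paper trades your Cauchy--Schwarz step for one more integration by parts, which is what makes the hypothesis $g\log g\in L^1$ exactly sufficient.

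Your route can in fact be closed: on the shell $\{R\le|w|\le 2R\}$ split according to $g\ge e^{-R}$ (where $|\log g|\le R$, so $R^{-2}\int g(\log g)^2\le R^{-1}\int g|\log g|\to 0$) and $g<e^{-R}$ (where $g(\log g)^2\le R^2 e^{-R}$, so the contribution is $\lesssim R^3 e^{-R}\to 0$). But as written you have left this as an unresolved obstacle, so the proposal for (iii) is incomplete; the paper's extra integration by parts is the cleaner fix.
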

\begin{proof}
Define 
\begin{align*}
\chi_R(w) : = \chi(w/R);
\end{align*}	
notice that 
\begin{align}
\abs{\nabla^j \chi_R} \lesssim R^{-j}, 	\label{ineq:gradchi}
\end{align}
and moreover, the derivatives are only supported in the region $w \approx R$.
	
Since $Q_L$ can be written as 
\[Q_L(g,g) = \nabla_v \cdot \left(\int_{\R^3} \Pi(v_*) |v_*|^{\gamma+2} [g(v-v_*)\nabla_v g(v) - g(v)\nabla_v g(v-v_*)] \dd v_*  \right),\]
integration by parts gives 
\begin{align*}
\int_{\R^3} \chi_R Q_{L}(g,g) \dd w&  = - \int_{\R^3} \nabla\chi_R\\
& \quad \cdot \left(\int_{\R^3} \Pi(w_*) |w_*|^{\gamma+2} [g(w-w_*)\nabla_w g(w) - g(w)\nabla_w g(w-w_*)] \dd w_*  \right) dw \\ 
& =  2\int_{\R^3} g \nabla \chi  \cdot  \textrm{div}_w \bar a^g (w)\;dw \\ 
& \quad + \int_{\R^3} g \nabla^2 \chi_R : \bar a^g (w)\;dw.%\left(\int_{\R^3} \Pi(v_*) |v_*|^{\gamma+2} [ g(v) g(v-v_*)] \dd v_*  \right) dv. 
\end{align*}
%It suffices to show that the factors involving the $\dd v_*$ convolutions are $L^1$ in $v$. 
Since $g \in L_w^p$ for any $p\ge 1$, thanks to  \eqref{ineq:aloinf} and  \eqref{ineq:agrad} the integrals are bounded uniformly in $R$ and hence as $R\to +\infty$, we have by \eqref{ineq:gradchi}, $\lim_{R \to \infty}\int_{\R^3} \chi\left( \frac{w}{R} \right) Q_{L}(g,g) \dd w = 0$. 

Similarly, integration by parts yields
\begin{align*}
\int_{\R^3} \chi_R |w|^2  Q_{L}(g,g) \dd w =  \;& \frac{2}{R}\int_{\R^3} g |w|^2\nabla \chi_R \cdot  \textrm{div}_w \bar a^g (w)\;dw \\
& +  4 \int_{\R^3} g \chi_R   \textrm{div}_w \bar a^g (w) \cdot w \;dw\\
& +  \frac{1}{R^2} \int_{\R^3} g |w|^2 \nabla^2 \chi : \bar a^g (w)\;dw + 2 \int_{\R^3} g \chi_R   Tr[\bar a^g]\;dw \\
& + \int_{\R^3} g \sum_{i,j} \bar a_{i,j}^g \left( 2 w_j \partial_i \chi_R + 2 w_i \partial_j \chi_R\right) \;dw.
\end{align*}
All of the terms involving derivatives of the cutoff function
vanish as $R \to \infty$ by the same arguments as used in the previous case. 
Since 
$$
2  g    \textrm{div}_w \bar a^g (w) \cdot w +  g  Tr[\bar a^g] =  2\frac{g(w)}{|z-w|} \nabla_z g(z) \cdot w + \frac{g(w)g(z)}{|w-z|},
$$
integration by parts yields 
\begin{align*}
\int_{\R^3}2  g   \chi_R \left( \textrm{div}_w \bar a^g (w) \cdot w +  g  Tr[\bar a^g] \right) \;dw & = \int_{\R^6}\chi_R\frac{g(w)g(z)}{|w-z|} \left[ \frac{1}{|z-w|} - \frac{|z-w|^2}{|z-w|^3}\right] \;dzdw \\ 
& \quad - 2\int_{\R^3} \nabla \chi_R \cdot w g \bar{a}^g \; dw \\ 
& =  - 2\int_{\R^3} \nabla \chi_R \cdot w g \bar{a}^g \; dw. 
\end{align*}
Hence, by the assumptions on $g$  and \eqref{ineq:gradchi}, we can pass to the limit $R\to +\infty$ and get 
\begin{align*}
\lim_{R \to \infty}\int_{\R^3} \chi\left( \frac{w}{R} \right) |w|^2 Q_{L}(g,g) \dd w =  0.
\end{align*}

For the entropy inequality, we begin with 
\begin{align*}
\int_{\R^3} \chi_R \log g  Q_{L}(g,g) \dd w = \;& \int_{\R^3} (2g\ln g -g) \nabla \chi_R \cdot  \textrm{div}_w \bar a^g (w)\;dw \\% \left(\int_{\R^3} \Pi(v_*) |v_*|^{\gamma+2} [ g(v)\nabla_{v_*} g(v-v_*)] \dd v_*  \right) dv \\ 
& + \int_{\R^3} (g\ln g -g) \nabla^2 \chi_R : \bar a^g (w)\;dw \\
& - \int_{\R^3} \chi_R \left [ \left \langle   \bar a^g (w) \frac{ \nabla g }{\sqrt{g}}, \frac{ \nabla g }{\sqrt{g}}\right \rangle - \nabla g \cdot  \textrm{div}_w \bar a^g (w)\right] \;dw\\
= \;& \int_{\R^3} (2g\ln g -2g) \nabla \chi_R \cdot  \textrm{div}_w \bar a^g (w)\;dw \\% \left(\int_{\R^3} \Pi(v_*) |v_*|^{\gamma+2} [ g(v)\nabla_{v_*} g(v-v_*)] \dd v_*  \right) dv \\ 
& + \int_{\R^3} (g\ln g -g) \nabla^2 \chi_R : \bar a^g (w)\;dw \\
& - \int_{\R^3} \chi_R \left [ \left \langle   \bar a^g (w) \frac{ \nabla g }{\sqrt{g}}, \frac{ \nabla g }{\sqrt{g}}\right \rangle - g \bar c^g(w) \right] \;dw,
\end{align*}
using the identity
\begin{align*}
 \int_{\R^3} \chi_R \nabla g \cdot  \textrm{div}_w \bar a^g (w) \;dw = & -\int_{\R^3} g \nabla \chi_R \cdot  \textrm{div}_w \bar a^g (w)\;dw \\
&  + \int_{\R^3} \chi_R g \bar c^g(w)  \;dw.
\end{align*}
Using, once more,  \eqref{ineq:aloinf}, \eqref{ineq:agrad}, $g \in L^p$ for any $p\ge 1$, \eqref{ineq:gradchi} and this time also (\ref {e:entropy}), we conclude that the first two integrals vanish as $R\to +\infty$.
Moreover, 
$$
\left \langle   \bar a^g (w) \frac{ \nabla g }{\sqrt{g}}, \frac{ \nabla g }{\sqrt{g}}\right \rangle - g \bar c^g(w)
$$
is  a $L^1$ function, thanks to \eqref{ineq:aloinf} and (\ref {e:entropy}) for the first term, and  (\ref{ineq:c}) for the second.  Hence, dominated convergence theorem allows to pass to the limit 
$$
\lim_{R \to \infty} \int_{\R^3} \chi_R \log g  Q_{L}(g,g) \dd w =  - \int_{\R^3} \left [ \left \langle   \bar a^g (w) \frac{ \nabla g }{\sqrt{g}}, \frac{ \nabla g }{\sqrt{g}}\right \rangle - g \bar c^g(w) \right] \;dw.
$$
The thesis follows by noticing that the integral on the right hand side can be rewritten as 
\begin{align*}
\int_{\R^3} & \left [ \left \langle   \bar a^g (w) \frac{ \nabla g }{\sqrt{g}}, \frac{ \nabla g }{\sqrt{g}}\right \rangle - g \bar c^g(w) \right] \;dw \\
&= \int_{\R^3}\int_{\R^3}\frac{g (w) g (w_*)}{|w-w_*|^{-\gamma-2}} \left\langle  \Pi(w-w_*)\left( \frac{\nabla g}{g}-\frac{\nabla_* g}{g_*}\right), \left( \frac{\nabla g}{g}-\frac{\nabla_* g}{g_*}\right) \right \rangle\;dvdv_* \ge 0.
\end{align*}
\end{proof}

\subsection{Proof of Theorem \ref{t:landau}}

As first step, we plug ansatz \eqref{e:ansatzII} into \eqref{e:mainL} and change to self-similar variables $y$ and $w$. The left-hand side of \eqref{e:mainL} transforms as follows: 
\begin{align*}
\partial_t f + v\cdot \nabla_x f &= \partial_t \phi + v\cdot \nabla_x \phi + \frac 1 {(-t)^{2+\theta(3+\gamma)}} g + \frac {1} {(-t)^{1+\theta(3+\gamma)}}\left( \frac{\partial y}{\partial t}\cdot \nabla_y g +   \frac{\partial w}{\partial t}\cdot\nabla_w g +  v \cdot \nabla_x g\right)\\
&= \partial_t \phi + v\cdot \nabla_x \phi + \frac 1 {(-t)^{2+\theta(3+\gamma)}} \left(g +  (1+\theta)y\cdot \nabla_y g +   \theta  w\cdot\nabla_w g + w\cdot \nabla_y g\right).
\end{align*}
Here, and throughout the proof, all terms involving $g$ are evaluated at $(y,w)$, and terms involving $\phi$ are evaluated at $(t,(-t)^{1+\theta}y,(-t)^\theta w)$, unless otherwise noted. 
Moreover, we have
\begin{align*}
\bar a^f &= \bar a^\phi + \frac 1 {(-t)^{1+\theta(3+\gamma)}} \int_{\R^3} \Pi(v-v_*) |v-v_*|^{\gamma+2}  g\left(\frac x {(-t)^{1+\theta}}, \frac {v_*} {(-t)^\theta}\right) \dd v_*\\
&= \bar a^\phi + (-t)^{-\gamma\theta-1} \int_{\R^3} \Pi((-t)^\theta(w- w_*)) |(-t)^\theta(w- w_*)|^{\gamma+2} g(y,w_*) \dd w_*\\
&= \bar a^\phi + (-t)^{2\theta-1} \int_{\R^3} \Pi(w- w_*) |w- w_*|^{\gamma+2} g(y,w_*) \dd w_*\\
 &= \bar a^\phi + (-t)^{2\theta-1} \bar a^g,
\end{align*}
and, for $\gamma > -3$, 
\begin{align*}
\bar c^f &= \bar c^\phi + \frac 1 {(-t)^{1+\theta(3+\gamma)}} c_\gamma\int_{\R^3} |v-v_*|^\gamma g\left(\frac x {(-t)^{1+\theta}}, \frac {v_*} {(-t)^\theta}\right) \dd v_*\\
&= \bar c^\phi + \frac 1 {(-t)} \bar c^g.
\end{align*}
%Clearly, $\bar c^f = \bar c^\phi + (-t)^{-1} \bar c^g$ also holds in the case $\gamma = -3$, where we have $\bar c^g = g$. We also have 
Taking into account that $D_v^2 f = D_v^2 \phi + (-t)^{-(1+(5+\gamma)\theta)} D_w^2 g$, the right-hand side of \eqref{e:mainL} becomes
\begin{align*}
Q_L(f,f) &= Q_L(\phi,\phi) + \frac 1 {(-t)^{1+(5+\gamma)\theta}}\tr(\bar a^\phi D_w^2 g) + (-t)^{2\theta-1}\tr (\bar a^g D_v^2 \phi) + \frac 1 {(-t)^{1+\theta(3+\gamma)}} \bar c^\phi g\\
&\quad + \frac 1 {(-t)} \bar c^g \phi + \frac 1 {(-t)^{2+(3+\gamma)\theta}} \tr(\bar a^g D_w^2 g)  + \frac 1 {(-t)^{2+\theta(3+\gamma)}}\bar c^g g.
\end{align*}
%\begin{align*}
%Q(f,f) &= \frac 1 {(-t)^2} \left[ \tr(\bar a^\psi D_w^2 \psi + \tr(\bar a^\psi D_w^2 g) + \tr (\bar a^g D_w^2 \psi) + 2 \psi g + \tr(\bar a^g D_w^2 g)  +  g^2\right].
%\end{align*}
Multiplying through by $(-t)^{2+\theta(3+\gamma)}$ and rearranging the terms, we have
\begin{equation}\label{e:expansion-L}
\begin{split}
0 = & g + (1+\theta)y\cdot \nabla_y g + \theta w\cdot \nabla_w g + w\cdot \nabla_y g - Q_{L,w}(g,g)\\
& - (-t)^{1-2\theta} \tr(\bar a^\phi D_w^2 g) - (-t)\bar c^\phi g - (-t)^{1+\theta(3+\gamma)}\bar c^g \phi  \\
&  - (-t)^{1+\theta(5+\gamma)} \tr(\bar a^g D_v^2\phi)  + (-t)^{2+\theta(3+\gamma)} (\partial_t \phi + v\cdot \nabla_x \phi - Q_L(\phi,\phi)),
\end{split}
\end{equation}
where 
$$
Q_{L,w} = \tr(\bar a^g D_w^2 g) + \bar c^g g.
$$ 
As $t\to 0$, we expect that the terms $g + (1+\theta)y\cdot \nabla_y g + \theta w\cdot \nabla_w g + w\cdot \nabla_y g - Q_{L,w}(g,g)$.  will dominate. To show that, we will prove in the next two lemmas that the two error functions
\[\begin{split}
\mathcal E_1(\phi,g)  &: = - (-t)^{1-2\theta} \tr(\bar a^\phi D_w^2 g) - (-t)\bar c^\phi g  - (-t)^{1+\theta(3+\gamma)}\bar c^g \phi\\
\mathcal{E}_2(\phi,g) &: =  - (-t)^{1+\theta(5+\gamma)} \tr(\bar a^g D_v^2\phi)  + (-t)^{2+\theta(3+\gamma)} (\partial_t \phi + v\cdot \nabla_x \phi - Q_L(\phi,\phi)),
\end{split}
\]
are decaying to zero as $t\to 0$. More precisely
\begin{lemma}\label{lemma_E_1}
For all $R> 0$ we have 
\begin{equation}\label{e:E1}
	\lim_{t \to 0} \;\sup_{\abs{y} \leq R} \;\sup_{v \in \R^3}  \abs{\mathcal E_1} = 0. 
\end{equation}
\end{lemma}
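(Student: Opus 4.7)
The plan is to split $\mathcal{E}_1$ into its three summands and show each tends to $0$ uniformly for $|y|\le R$ and $w\in\R^3$ as $t\to 0^-$, by combining the scaling structure of the Landau coefficients, the decay hypothesis~\eqref{e:lim-phi-t1} on $\phi$, and the regularity and Lebesgue bounds on $g$ from~\eqref{e:g-smooth}. Introducing the rescaled profile $\phi_t(y,w):=\phi(t,(-t)^{1+\theta}y,(-t)^\theta w)$, a change of variable in the convolution integrals defining $\bar a$ and $\bar c$ yields the identities
\begin{align*}
\bar{a}^\phi\bigl(t,(-t)^{1+\theta}y,(-t)^\theta w\bigr) &=(-t)^{(5+\gamma)\theta}\,\bar{a}^{\phi_t(y,\cdot)}(w), \\
\bar{c}^\phi\bigl(t,(-t)^{1+\theta}y,(-t)^\theta w\bigr) &=(-t)^{(3+\gamma)\theta}\,\bar{c}^{\phi_t(y,\cdot)}(w)
\end{align*}
(the second being trivial when $\gamma=-3$), together with the norm identity $\|\phi_t(y,\cdot)\|_{L^p_w}=(-t)^{-3\theta/p}\|\phi(t,(-t)^{1+\theta}y,\cdot)\|_{L^p_v}$. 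In parallel, \eqref{e:g-smooth} makes $D^2_w g$ bounded on $\{|y|\le R\}\times\R^3$, and Lemma~\ref{l:coeffs} similarly bounds $\bar{c}^g$ there.

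For the first summand $(-t)^{1-2\theta}\tr(\bar a^\phi D_w^2 g)$, I apply \eqref{ineq:aloinf} to $\phi_t(y,\cdot)$ with an exponent $q\in[1,3/(\gamma+5))$ (the boundary case $\gamma=-2$ uses instead the direct estimate $|\bar{a}^h|\lesssim\|h\|_{L^1}$ that follows from \eqref{ineq:a1hi}); the scaling cancellations then produce an interpolatory bound of the form
\begin{align*}
|\mathcal{E}_1^{(1)}| & \le C\Bigl[(-t)^{1+\theta(3+\gamma)-3\theta/q}\sup_{|y|\le R}\|\phi(t,(-t)^{1+\theta}y,\cdot)\|_{L^q_v}\Bigr]^{\alpha} \\
& \qquad \times \Bigl[(-t)^{1+\theta(3+\gamma)}\sup_{|y|\le R}\|\phi(t,(-t)^{1+\theta}y,\cdot)\|_{L^\infty_v}\Bigr]^{1-\alpha},
\end{align*}
with $\alpha=(\gamma+5)q/3$. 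Both bracketed factors coincide exactly with expressions whose limit is zero in \eqref{e:lim-phi-t1} at $(i,j,\ell)=(0,0,0)$, with $p=q$ and $p=\infty$ respectively, so both vanish as $t\to 0^-$ provided their $(-t)$-exponents are nonnegative. The second summand $(-t)\bar c^\phi g$ is handled analogously: boundedness of $g$ reduces matters to the case $\gamma=-3$ (where $\bar c^\phi=\phi$ and the $p=\infty$ case of \eqref{e:lim-phi-t1} applies directly) or, for $\gamma>-3$, to an analogous $L^q$--$L^\infty$ interpolation for $\bar c^{\phi_t}$ obtained by the same near/far splitting argument used in Lemma~\ref{l:coeffs}. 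The third summand $(-t)^{1+\theta(3+\gamma)}\bar c^g\phi$ is controlled by the uniform boundedness of $\bar c^g$ together with the $p=\infty$ case of \eqref{e:lim-phi-t1}.

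The main obstacle is purely algebraic: verifying that an exponent $q$ satisfying both (i) $1\le q<3/(\gamma+5)$, imposed by the singular integrability of $|v_*|^{\gamma+2}$ in Lemma~\ref{l:coeffs}, and (ii) $1+\theta(3+\gamma)-3\theta/q\ge 0$, needed to apply \eqref{e:lim-phi-t1}, actually exists. A short computation shows compatibility of the two constraints precisely when $\theta<1/2$; for $\theta\le 0$ condition (ii) is automatic since $-3\theta/q\ge 0$ and $1+\theta(3+\gamma)>0$ throughout our parameter range, so the restriction is only active when $\theta>0$. This is exactly how the upper bound $\theta<1/2$ in Theorem~\ref{t:landau} enters the proof.
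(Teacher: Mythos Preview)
Your proof is correct and follows essentially the same strategy as the paper's: bound $\bar a^\phi$ and $\bar c^\phi$ via the $L^q$--$L^\infty$ interpolation estimates of Lemma~\ref{l:coeffs}, then factor the resulting power of $(-t)$ so that each bracket matches the expression in \eqref{e:lim-phi-t1}, with the compatibility of the exponent constraints forcing $\theta<1/2$. Your introduction of the rescaled profile $\phi_t$ is just a bookkeeping device that leads to the same inequality the paper writes directly in \eqref{3.11}; if anything you are slightly more careful than the paper in separating out the boundary cases $\gamma=-2$ (where the admissible range in \eqref{ineq:aloinf} degenerates and one falls back on $|\bar a^h|\lesssim\|h\|_{L^1}$) and $\gamma=-3$ (where $\bar c^\phi=\phi$).
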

\begin{proof}
Using \eqref{ineq:aloinf} and assumption \eqref{e:g-smooth}, for all $1 \leq p < 3/(\gamma + 5)$, we estimate the first term in $\mathcal E_1$ as
%By \eqref{ineq:aloinf} and assumption \eqref{e:g-smooth}, for all $1 \leq p < 3/(\gamma + 5)$ there holds  
\begin{align}
|(-t)^{1-2\theta}\tr(\bar a^\phi D_w^2g)| & \lesssim (-t)^{1-2\theta} \norm{\phi}_{L^p_v}^{\frac{p}{3}(\gamma + 5)}\norm{\phi}_{L^\infty_v}^{1- \frac{p}{3}(\gamma + 5)}  \nonumber \\ 
& = \left( (-t)^{1 + \theta(\gamma+ 3) - \frac{3\theta}{p}} \norm{\phi}_{L^p_v}\right)^{\frac{p}{3}(\gamma + 5)} \left( (-t)^{1 + \theta(\gamma+ 3)} \norm{\phi}_{L^\infty_v}\right)^{1- \frac{p}{3}(\gamma + 5)}.\label{3.11}
\end{align}
The  second factor vanishes for any $\gamma$ and $\theta$, thanks to  (\ref{case2}) with $p=\infty$. For the first term, we still use (\ref{case2}) if $ p > \frac{3\theta}{1+\theta(3+\gamma)}$.  Hence, we need  
\begin{align*}
	\frac{3\theta}{1 + \theta(3+\gamma)} < \frac{3}{\gamma + 5},% \label{ineq:struc}
\end{align*}
which is fulfilled if $\theta < 1/2$. 

Now let us turn to the term involving $c^\phi$. 
We have for any $1 \leq p < 3/(3+\gamma)$ by \eqref{ineq:c}, 
\begin{align}\label{c_inf}
%\abs{\bar c^\phi} & \lesssim \norm{\phi}_{L^\infty} \quad \textup{ if } \quad \gamma=-3
\abs{\bar c^\phi} & \lesssim \norm{\phi}_{L^p_v}^{\frac{p}{3}(\gamma+3)} \norm{\phi}_{L^\infty_v}^{1- {\frac{p}{3}(\gamma+3)}}. %\quad \textup{ if } \quad \gamma\in (-3,-2]. 
\end{align}
Hence, by assumption \eqref{e:g-smooth}, 
\begin{align*}
(-t) \abs{\bar c^\phi g} \lesssim \left((-t)^{1 + \theta(\gamma +3) - \frac{3\theta}{p}}\norm{\phi}_{L^p_v}\right)^{\frac{p}{3}(\gamma+3)} \left((-t)^{1 + \theta(\gamma +3)}\norm{\phi}_{L^\infty_v}\right)^{1- {\frac{p}{3}(\gamma+3)}}. 
\end{align*}
Analogous to above, the  second factor vanishes for any $\gamma$ and $\theta$, thanks to  (\ref{case2}) with $p=\infty$. For the first one, we still use (\ref{case2}) if $ p > \frac{3\theta}{1+\theta(3+\gamma)}$.  Hence, we need once more 
%in order to ensure that we can find a suitable $p$, we obtain the structural requirement
\begin{align*}
	\frac{3\theta}{1 + \theta(3+\gamma)} < \frac{3}{\gamma + 3}, 
\end{align*}
which is satisfied  for any $\gamma$ and $\theta$. 

%Hence the term $(-t) \bar c^\phi g$ vanishes as in \eqref{e:E1}. 

Finally, by assumption \eqref{e:g-smooth} and \eqref{ineq:c} for $g$, we get  
\begin{align*}
(-t)^{1+\theta(3+\gamma)}\abs{\bar c^g \phi} \lesssim (-t)^{1+\theta(3+\gamma)}\abs{\phi},  
\end{align*}
which vanishes  for any $\gamma$ and $\theta$, thanks again to  (\ref{case2}) with $p=\infty$. 
This completes the proof of the lemma. 
\end{proof}

\begin{lemma}\label{lemma_E_2}
For any $R_1,R_2> 0$, we have. 
\begin{equation}\label{e:E2}
	\lim_{t \to 0}\; \sup_{\abs{y} \leq R_1}\;  \sup_{\abs{w} \leq R_2} \;\abs{\mathcal E_2} = 0. 
\end{equation}
\end{lemma}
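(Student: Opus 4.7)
My plan is to decompose $\mathcal{E}_2$ into four pieces and show each vanishes uniformly on the compact set $\{|y|\le R_1\}\times\{|w|\le R_2\}$ as $t\nearrow 0$. Writing
\[
\mathcal{E}_2 = -(-t)^{1+\theta(5+\gamma)}\tr(\bar a^g D_v^2\phi) + (-t)^{2+\theta(3+\gamma)}\partial_t\phi + (-t)^{2+\theta(3+\gamma)}v\cdot\nabla_x\phi - (-t)^{2+\theta(3+\gamma)}Q_L(\phi,\phi),
\]
each term will be handled by pairing an interpolation bound on the relevant Landau coefficient (from Lemma \ref{l:coeffs} or estimate \eqref{c_inf} proved inside Lemma \ref{lemma_E_1}) with a suitable instance of the vanishing hypothesis \eqref{e:lim-phi-t1}.

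The first three terms are direct. For $\tr(\bar a^g D_v^2\phi)$, estimate \eqref{ineq:aloinf} applied to $g(y,\cdot)$ combined with \eqref{e:g-smooth} gives that $|\bar a^g(y,w)|$ is uniformly bounded for $|y|\le R_1$, and then the $(-t)$-exponent $1+\theta(5+\gamma)$ matches \eqref{e:lim-phi-t1} with $p=\infty$, $j=2$, $\ell=i=0$. The $\partial_t\phi$ term is matched by \eqref{e:lim-phi-t1} with $p=\infty$ and $i=1$. For the transport term, I use $|v|=(-t)^\theta|w|\le R_2(-t)^\theta$ on the compact $w$-set to reduce the bound to $R_2(-t)^{2+\theta(4+\gamma)}\|\nabla_x\phi\|_{L^\infty_v}$, which matches \eqref{e:lim-phi-t1} with $p=\infty$, $\ell=1$. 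The admissibility condition $1+\theta(3+\gamma)\ge 0$ is satisfied throughout, since $\theta>-1$ and $\gamma\in[-3,-2]$.

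The main work is the Landau collision term, which splits as $\tr(\bar a^\phi D_v^2\phi)+\bar c^\phi\phi$. For the first piece, I fix $p\in[1,3/(\gamma+5))$ and apply \eqref{ineq:aloinf} to obtain $|\bar a^\phi|\lesssim\|\phi\|_{L^p_v}^{p(\gamma+5)/3}\|\phi\|_{L^\infty_v}^{1-p(\gamma+5)/3}$. Following the algebraic strategy of Lemma \ref{lemma_E_1}, I rewrite
\[
(-t)^{2+\theta(3+\gamma)}|\bar a^\phi D_v^2\phi|\lesssim \bigl((-t)^{1+\theta(3+\gamma)-3\theta/p}\|\phi\|_{L^p_v}\bigr)^{p(\gamma+5)/3}\bigl((-t)^{1+\theta(3+\gamma)}\|\phi\|_{L^\infty_v}\bigr)^{1-p(\gamma+5)/3}\bigl((-t)^{1+\theta(3+\gamma)+2\theta}|D_v^2\phi|\bigr).
\]
A direct computation confirms that the $(-t)$-exponents sum to $2+\theta(3+\gamma)$, and each of the three parenthesized factors vanishes by \eqref{e:lim-phi-t1} (applied with $p$, $\infty$, $\infty$ respectively) provided the admissibility inequality $1+\theta(3+\gamma)-3\theta/p\ge 0$ holds. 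Together with $p<3/(\gamma+5)$, this forces $3\theta/(1+\theta(3+\gamma))<3/(\gamma+5)$, which simplifies to $2\theta<1$, i.e.\ exactly the hypothesis $\theta<1/2$. The $\bar c^\phi\phi$ piece is treated analogously when $\gamma\in(-3,-2]$: apply \eqref{c_inf}, bound $|\phi|\le\|\phi\|_{L^\infty_v}$ pointwise, and split as above into a product of two factors of the form $\bigl((-t)^{1+\theta(3+\gamma)-3\theta/p}\|\phi\|_{L^p_v}\bigr)^{p(\gamma+3)/3}\bigl((-t)^{1+\theta(3+\gamma)}\|\phi\|_{L^\infty_v}\bigr)^{2-p(\gamma+3)/3}$; the admissibility condition reduces to $\theta(3+\gamma)<1+\theta(3+\gamma)$, which is automatic. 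When $\gamma=-3$ we have $\bar c^\phi\phi=\phi^2$ directly, so $(-t)^2\phi^2\le\bigl((-t)\|\phi\|_{L^\infty_v}\bigr)^2\to 0$ by \eqref{e:lim-phi-t1}.

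The main obstacle is the bookkeeping in the $Q_L(\phi,\phi)$ term: one must choose an interpolation exponent in a range that is simultaneously compatible with the coefficient estimate and with the hypothesis \eqref{e:lim-phi-t1}. The upper bound $\theta<1/2$ enters precisely at this step, in ensuring that such an exponent exists for the $\bar a^\phi D_v^2\phi$ contribution.
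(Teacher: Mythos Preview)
Your proof is correct and follows essentially the same approach as the paper: you decompose $\mathcal{E}_2$ into the same four pieces, dispatch the first three directly via \eqref{e:lim-phi-t1}, and handle $Q_L(\phi,\phi)=\tr(\bar a^\phi D_v^2\phi)+\bar c^\phi\phi$ by the same interpolation splitting, with the constraint $\theta<1/2$ arising at exactly the same point. Your treatment is in fact slightly more careful than the paper's in that you explicitly separate the case $\gamma=-3$ (where $\bar c^\phi\phi=\phi^2$) and verify the admissibility condition $1+\theta(3+\gamma)\ge 0$ for the $p=\infty$ instances of \eqref{e:lim-phi-t1}.
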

\begin{proof}
First, as $\abs{c^g} + \abs{\bar{a}^g} \lesssim 1$ by assumption \eqref{e:g-smooth}, 
\begin{align*}
	\abs{\mathcal E_2} & \lesssim  (-t)^{1+\theta(3+\gamma)}\abs{\phi} +  (-t)^{1+\theta(5+\gamma)}\abs{D_v^2\phi}  \\ 
	& \quad + (-t)^{1+\theta(3+\gamma) + (1+\theta)} \abs{ (-t)^{-\theta}\partial_t \phi + w\cdot \nabla_x \phi} +  |(-t)^{2+\theta(3+\gamma)} Q_L(\phi,\phi)|.
\end{align*}
The first  three terms on the right hand side converge to zero in $L^\infty_{loc}( \R^6)$ directly by assumption \eqref{e:lim-phi-t1}. 
We now look at the collision term 
\[ (-t)^{2+\theta(3+\gamma)}Q_L(\phi,\phi) = (-t)^{2+\theta(3+\gamma)} [\tr(\bar a^\phi D_v^2\phi)+\bar c^\phi \phi] .\] 
Analogously as in the previous lemma,  we write %$$
%\|\bar a^\phi \|_{L^\infty_v(\mathbb{R}{^3})} \le c(\gamma,p) \|\phi  \|_{L^p_v(\mathbb{R}{^3})}^{(\gamma + 5)\frac{p}{3}} \|\phi  \|_{L^\infty_v(\mathbb{R}{^3})}^{1-(\gamma + 5)\frac{p}{3}} ,
%$$
%for $1 \leq p < 3/(\gamma + 5)$ to be chosen below, which yields 
\begin{align*}
	|(-t)^{2 + \theta(3+\gamma)} \tr(\bar a^\phi D_v^2\phi)|  \lesssim &\left( (-t)^{1 + \theta(3+\gamma) - 3\theta/p}\|\phi  \|_{L^p_v(\mathbb{R}{^3})} \right)^{(\gamma + 5)\frac{p}{3}} \\
	&\cdot\left( (-t)^{1 + \theta(3+\gamma)}\|\phi  \|_{L^\infty_v(\mathbb{R}{^3})}\right)^{1-(\gamma + 5)\frac{p}{3}}  \left( (-t)^{1 + \theta(3+\gamma) + 2\theta}\| D^2_v \phi  \|_{L^\infty_v(\mathbb{R}{^3})} \right).  
\end{align*}
The second and third factor vanish for any $\gamma$ and $\theta$, thanks to  (\ref{case2}) with $p=\infty$. The first term is identical to the one in (\ref{3.11}) and vanishes for $p<\frac{3}{5+\gamma}$ and $\theta < \frac{1}{2}$. 
To estimate the last term $\bar c^\phi \phi$ we use (\ref{c_inf}) and get, 
\begin{align*}
(-t)^{2+\theta(3+\gamma)} \abs{\bar c^\phi \phi} & \lesssim (-t)^{2+\theta(3+\gamma)} \norm{\phi}_{L^p}^{\frac{p}{3}(\gamma+3)} \norm{\phi}_{L^\infty}^{2 - \frac{p}{3}(\gamma+3)} \\ 
 & \lesssim \left( (-t)^{1 + \theta (3+\gamma) - \frac{3\theta}{p}} \norm{\phi}_{L^p}\right)^{\frac{p}{3}(\gamma+3)} \left( (-t)^{1 + \theta (3+\gamma)} \norm{\phi}_{L^\infty}\right)^{2 - \frac{p}{3}(\gamma+3)}, 
\end{align*}
with $1 \leq p < 3/(\gamma+3)$. The second factor vanishes for any $\gamma$ and $\theta$, thanks to  (\ref{case2}) with $p=\infty$. For the first term we still use  (\ref{case2}) with $\frac{3\theta}{1+\theta(3+\gamma)} < p < \frac{3}{(\gamma+3)}$. This completes the proof of the lemma. 
\end{proof}

Having shown that the dominant terms in (\ref{e:expansion-L}) are 
\begin{equation*}
g + (1+\theta)y\cdot \nabla_y g + \theta w\cdot \nabla_w g + w\cdot \nabla_y g - Q_{L,w}(g,g),
\end{equation*}
our next step is to show that the only admissible solution to 
$$
0 = g + (1+\theta)y\cdot \nabla_y g + \theta w\cdot \nabla_w g + w\cdot \nabla_y g - Q_{L,w}(g,g),
$$
is the trivial one $g \equiv 0$. 
\begin{proof}[Proof of Theorem \ref{t:landau}]
We  multiply \eqref{e:expansion-L} by a general smooth test function $\psi(y,w)$ with compact support in $\R^6$, and take the limit $t\to 0$. Thanks to  \eqref{e:E1} and \eqref{e:E2}, we conclude that $g$ satisfies
\begin{equation}\label{e:yw}
g + (1+\theta)y\cdot \nabla_y g + \theta w\cdot \nabla_w g + w\cdot \nabla_y g - Q_{L,w}(g,g) = 0,
\end{equation}
 in the sense of distributions. However, using the regularity and decay assumptions for $g$  \eqref{e:g-smooth}, we conclude that \eqref{e:yw} holds pointwise for all $(y,w)\in \R^6$. 
 
 The rest of the theorem is devoted to  showing that the only solution to (\ref{e:yw}) is the trivial one, $g \equiv 0$. The proof varies depending on the value of $\theta$. We distinguish three cases, $\theta \neq \pm \frac{1}{3}$, $\theta = 1/3$ and $\theta =- 1/3$.
 
 \begin{itemize}
 \item Let { \em{$\theta \neq  1/3$,  $\theta \neq -1/3$.}} Let $\chi_R(w) = \chi(w/R)$ be a cutoff function and $\varphi \in C^\infty_0(B(0,1))$ a smooth function such that $\int_{\R^3} \varphi(y) dy = 1$. 
 For $y_0 \in \R^3$ define $\varphi_{y_0}(y) := \varphi(y+y_0)$.  We multiply (\ref{e:yw}) by $\chi_R(w) \varphi(y + y_0)$ for some $y_0 \in \R^3$ and integrate in $\R^6$. Recall the decomposition $g = h(y,w) + q(w)$. Integration by parts yields 
   \begin{align*}
	(1-3\theta)\int_{\R^6} \chi_R g \varphi_{y_0}  \dd w \dd y & - \theta \int_{\R^6}  g \varphi_{y_0}  w \cdot \nabla_w \chi_R \;dw \;dy - (1+\theta)\int_{\R^6} h \chi_R  y\cdot \nabla_y \varphi_{y_0} \dd w  \dd y \\
	&-  \int_{\R^6} h\chi_R  w\cdot \nabla_y \varphi_{y_0} \dd w \dd y -3(1+\theta)  \int_{\R^6} h\chi_R  \varphi_{y_0}   \dd w \dd y \\
	= & \int_{\R^6} \varphi  \chi_R Q_{L,w}(g,g) \dd w \dd y .
\end{align*}
 We first take the limit $R\to +\infty$. 
 Note that $\abs{w \cdot \nabla_w \chi_R} \lesssim 1$ by \eqref{ineq:gradchi} and $w \cdot \nabla_w \chi_R$ converges to zero pointwise. Therefore, by the dominated convergence theorem, the second term vanishes. For the collision term, we use Lemma  \ref{l:invariants}.
 The remaining terms converge by the dominated convergence theorem and the assumptions on $h$ and $q$.  
 Therefore, we obtain
  \begin{align*}
	(1-3\theta)\int_{\R^6} (q+h) \varphi_{y_0}  \dd w \dd y& - 3(1+\theta)  \int_{\R^6} h \varphi_{y_0}  \dd w \dd y \\
	& - \int_{\R^6} h   w\cdot \nabla_y \varphi_{y_0} \dd w \dd y- (1+\theta)\int_{\R^6} h   y\cdot \nabla_y \varphi_{y_0} \dd w  \dd y =0.
 \end{align*}
Next, we perform the limit $y_0 \to \infty$. Thanks to the assumption {{$(1+\abs{y} + \abs{w})h \in L^1(\R^6)$}}, all of the terms involving $h$ vanish as $y_0 \to \infty$ by the dominated convergence theorem. Hence, the above identity reduces to 
\begin{align*}
	(1-3\theta)\int_{\R^3} q(w) \dd w  = 0. 
\end{align*}
Since $q \geq 0$, we conclude that $q \equiv 0$. The condition $g\ge 0$ and $q=0$ implies $h\ge 0$. We now go back to (\ref{e:yw}) with $q=0$, multiply it by  $\chi_{R_1}(y)  \chi_{R_2}(w)$ with $\chi_{R_1}(y) = \chi(y/R_1)$ and $\chi_{R_2}(w) = \chi(w/R_2)$ and integrate in $\R^6$. Similarly as above, we first take the limit $R_2 \to +\infty$ and get 
\begin{eqnarray*}%\label{e:intwy}
	-2(1+3\theta)\int_{\R^6} \chi_{R_1} h  \dd w \dd y & - (1+\theta)\int_{\R^6} h y\cdot \nabla_y \chi_{R_1} \dd w \dd y  \\
	&- \int_{\R^6} h w\cdot \nabla_y \chi_{R_1} \dd w\dd y = 0.
\end{eqnarray*}
Using the assumption that {{$(1+w)h \in L^1(\R^6)$}}, we can pass to the limit $R_1 \to +\infty$ in the above equation by the dominated convergence theorem as we used above (in particular that $y \cdot \nabla_y \chi_{R_1}$ is uniformly bounded and converges pointwise to zero) and obtain 
$$
-2(1+3\theta)\int_{\R^6}  h  \dd w \dd y =0,
$$
which implies $h \equiv 0$.

% If $\theta = -1/3$, we multiply (\ref{e:intw}) with $q=0$ by $ \chi_R(y)\ln h$ and integrate. After several integration by parts we get 
%\begin{align*}
%-\frac{2}{3} \int_{\R^6} h \ln h y \cdot \nabla \chi_R \dd w \dd y & -  \int_{\R^6}  h \ln h w \cdot \nabla \chi_R \dd w \dd y \\
%&+ 2 \int_{\R^6} h \chi_R  \dd w \dd y + \frac{2}{3}  \int_{\R^6} h y \cdot \nabla \chi_R \dd w \dd y \\
%&+  \int_{\R^6} h w \cdot \nabla \chi_R  \dd w \dd y  \le 0.
%\end{align*}
%Assuming {\color{blue}{$(1+y+w)h \in L^1(\R^6)$ and $(y+w)h \ln h \in L^1(\R^6)$}}, we can pass to the limit in $R$ and obtain 
%$$
%2 \int_{\R^6} h   \dd w \dd y \le 0,
%$$
%which implies again $h \equiv 0$. 

 \item Let {\em{$\theta = 1/3.$} } As before, let $\chi_R(w) = \chi(w/R)$ a cutoff function and $\varphi \in C^\infty_0(B(0,1))$ a smooth function such that $\int_{\R^3} \varphi(y) dy = 1$ and take $\varphi_{y_0}(y) = \varphi(y + y_0)$. This time we multiply (\ref{e:yw}) by $\chi_R(w) |w|^2 \varphi_{y_0}$ for some $y_0 \in \R^3$ and integrate in $\R^6$. We obtain 
 \begin{align*}
 -\frac{2}{3} \int_{\R^6}   g \varphi_{y_0} |w|^2 \chi_R \;dwdy &- \frac{1}{3} \int_{\R^6}   g \varphi_{y_0} |w|^2 w \cdot \nabla_w \chi_R \;dwdy  \\
 -2  \int_{\R^6} h \chi_R \varphi_{y_0} |w|^2 \;dwdy &- \frac{2}{3}  \int_{\R^6} |w|^2 h \chi_R y \cdot \nabla_y \varphi_{y_0} \;dydw   \\
 -  \int_{\R^6} |w|^2 \chi_R h w \cdot \nabla_y \varphi_{y_0} \;dwdy &=  \int_{\R^6} \varphi_{y_0} \chi_R(w) |w|^2  Q_{L}(g,g)\;dwdy.
 \end{align*}
Thanks to the condition {{$q(1+ |w|^2) \in L^1_w$ and $h(1 + \abs{y}\abs{w}^2 + |w|^3)  \in L^1(\R^6)$ }} and Lemma  \ref{l:invariants} for the collision term, we can pass to the limit $R\to +\infty$ using the dominated convergence theorem as above and we get 
\begin{align*}
 -\frac{2}{3} \int_{\R^6}   (q+h) \varphi_{y_0} |w|^2  \;dwdy & -4  \int_{\R^6} h  \varphi_{y_0} |w|^2 \;dwdy  \; + \\
 - \frac{4}{3}  \int_{\R^6} |w|^2 h  y \cdot \nabla_y \varphi_{y_0} \;dydw  & -  \int_{\R^6} |w|^2  h w \cdot \nabla_y \varphi_{y_0} \;dwdy = 0.
 \end{align*}
 The limit $y_0 \to +\infty$, using again $h(1 + \abs{y}\abs{w}^2 + |w|^3)  \in L^1_{y,w}$, gives 
 $$
 -\frac{2}{3} \int_{\R^3}   q|w|^2  \;dw =0,
 $$
which implies $q\equiv 0$. To show that also $h \equiv 0$, we multiply (\ref{e:yw}) with $q=0$ by $\chi_{R_1}(w)\chi_{R_2}(y) |w|^2$ and integrate in $\R^6$. After taking the limit $R_1 \to +\infty$ we obtain 
\begin{align*}
-\frac{2}{3} \int_{\R^6}   h \chi_{R_2}|w|^2  \;dwdy & -4  \int_{\R^6} h  \chi_{R_2} |w|^2 \;dwdy  \; + \\
 - \frac{4}{3}  \int_{\R^6} |w|^2 h  y \cdot \nabla_y\chi_{R_2} \;dydw  & -  \int_{\R^6} |w|^2  h w \cdot \nabla_y \chi_{R_2}\;dwdy = 0.
\end{align*}
The limit $R_2 \to +\infty$ yields 
$$
-\frac{14}{3} \int_{\R^6}   h |w|^2  \;dwdy =0,
$$
which implies, since $h\ge 0$, that  $h \equiv 0$. 

\item Let {\em{$\theta = -1/3.$} } Mimicking the same calculation of the case $\theta =1/3$, we multiply (\ref{e:yw}) by $\chi_R(w) |w|^2 \varphi_{y_0}$, integrate over $\R^6$ and perform the limit $R \to +\infty$. We get \begin{align*}
 \frac{8}{3} \int_{\R^6}   (q+h) \varphi_{y_0} |w|^2  \;dwdy & -2  \int_{\R^6} h  \varphi_{y_0} |w|^2 \;dwdy  \; + \\
 - \frac{2}{3}  \int_{\R^6} |w|^2 h  y \cdot \nabla_y \varphi_{y_0} \;dydw  & -  \int_{\R^6} |w|^2  h w \cdot \nabla_y \varphi_{y_0} \;dwdy = 0.
 \end{align*}
The limit $y_0 \to +\infty$, using again {{$h(1+|w|^3)  \in L^1(\R^6)$}}, gives 
 $$
 \frac{8}{3} \int_{\R^3}   q|w|^2  \;dw =0,
 $$
which implies $q\equiv 0$. To show that also $h \equiv 0$, we multiply (\ref{e:yw}) with $q=0$ by $\chi_{R_1}(w)\chi_{R_2}(y) |w|^2$ and integrate in $\R^6$. The limits $R_1, R_2 \to +\infty$ yield 
$$
\frac{2}{3} \int_{\R^6}   h |w|^2  \;dwdy =0,
$$
which implies, since $h\ge 0$, that  $h \equiv 0$. 
\end{itemize}
This finishes the proof of the theorem. 
\end{proof}

\section{The Vlasov-Poisson-Landau system}  \label{s:landau-poisson}
In this section we analyze the following system:
\[
\partial_t f + v\cdot \nabla_x f + F[f]\cdot \nabla_v f =Q_L(f,f),
\]
with 
\[
F[f]=C\int_{\R^3} \frac{x-z}{|x-z|^3}\left[\int_{\R^3} f(z,v) \dd v - n_0 (z)\right] \dd z,
\]
where $n_0(x)  \geq 0$ is a fixed function that models a neutralizing background.  
If $C \ge 0$ we are in the repulsive interaction case, if $C\le 0$ we are in the attractive case.

Unlike Landau and Boltzmann, the Vlasov-Poisson-Landau equation only has a one-parameter scaling symmetry, and hence there is only one case to consider: $\gamma = -3$ and $\theta = -\frac 1 3$. 
Therefore, our ansatz becomes
\begin{equation}\label{e:ansatz-LCP}
f(t,x,v) = \phi(x,t,v) + \frac{1}{(-t)}g\left(\frac{x}{(-t)^{2/3}}, (-t)^{1/3}v\right).
\end{equation}
For the analysis of the Vlasov-Landau-Poisson system, our proof requires the use of $\ln g$ as a test function, which requires the following additional assumptions on the profile 
\begin{eqnarray}\label{e:sec_moment}
	(1 + \abs{w})(g \ln g - g) \in L^1_{y,w},  \quad \nabla \sqrt{g} \in L^\infty_y L_w^2, \quad g\in L^1_{y,w}.
\end{eqnarray}
One new detail must be addressed: due to the non-locality in $x$ introduced by the interaction term, we must be more specific about the global structure of the solution. 
Our methods can handle any of the following three cases, each of which is physically relevant: 
\begin{itemize}
	\item[(a)] $n_0 = 0$ and $f \in L^1(\R^6)$. This case is natural for studying gravitational interactions (where $f$ models the density of stars or galaxies and hence in the attractive case).  
	\item[(b)] The physical domain is $\mathbb{T}^3_x$ and we take $n_0(x) =n_0 = \frac{1}{( 2\pi )^3} \int_{\R^6} f(x,v) dx dv$. 
	This case is most natural for studying periodic perturbations arising in the kinetic theory of plasmas (where $f$ will model the density of electrons in a plasma and the $n_0$ models a homogeneous background of ions, hence the interactions are repulsive). 
	\item[(c)] The solution is given by $f(t,x,v) = \mu(v) + h(t,x,v)$ where $\mu$ is a Maxwellian with fixed density, momentum, and temperature, $h \in L^1(\R^6)$ with average zero, and $n_0 = \int_{\R^3} \mu(v) dv$. This case is most natural for studying localized perturbations of a homogeneous plasma (here $f$ models the density of electrons in a plasma and the $n_0$ models a homogeneous background of ions, hence the interactions are repulsive). 
\end{itemize}
Our proof easily adapts to any of these three cases, so we focus on the simplest one, which is case (a). It is straightforward to extend the argument to cases (b) and (c). 
As in the previous section, we will assume $\phi$ satisfies \eqref{e:lim-phi-t1}, which, for $i=0$ and $\theta=\frac{1}{\gamma}=-\frac{1}{3}$, reads
\begin{equation}\label{e:lim-phi-2}
\lim_{t \to 0} (-t)^{1+  \tfrac{1}{p} + \tfrac 2 3\ell -\tfrac 1 3 j} \sup_{\abs{y} \leq R} \norm{D_v^j D_x^\ell \phi(t,(-t)^{2/3}y,\cdot)}_{L^p_v} = 0,
\end{equation}
for all $1 \leq p \leq \infty$, $0\leq j\leq 1$, $0\leq \ell \leq 2$, $R>0$. 
Due to the nonlocality in $x$ of the interaction force, we also enforce the condition that the density $\rho_\phi = \int_{\R^3} \phi(t,x,v) dv$ satisfies 
%\begin{equation}\label{e:lim-rho}
% \norm{\rho_\phi(t)}_{L^p_x} , \norm{\rho_\phi(t)}_{L^\infty_x} < +\infty,
%\end{equation}
%and the same estimate 
\begin{equation}\label{e:lim-rho}
	\lim_{t \to 0} (-t)^{1+  \tfrac{1}{p}} \norm{\rho_\phi(t)}_{L^p_x} = 0,
\end{equation}
for some $p <3$ and also for $p=\infty$ (and hence everything in between by interpolation). 

%{\color{blue}{The conditions on $g$ are similar to the previous section, however, due to the non-locality, we currently can only treat the case that $wg \in L^1(\R^6)$.}} 
\begin{theorem}\label{main_VLP}
	Let $f$ satisfy \eqref{e:f-condition} and \eqref{e:singularity}, $\phi$ satisfy  \eqref{e:lim-phi-t1} and \eqref{e:lim-rho}, and $g$ satisfy \eqref{e:g-smooth} and \eqref{e:sec_moment}. %Let, moreover, $g$ satisfy the entropy conditions \eqref{e:entropy}. 
	Then for any solution to the Vlasov Landau Poisson system of the form
	\begin{equation*}%\label{e:ansatz-LCP}
f(t,x,v) = \phi(x,t,v) + \frac{1}{(-t)}g\left(\frac{x}{(-t)^{2/3}}, (-t)^{1/3}v\right),
\end{equation*}
 we must have $g\equiv 0$.
\end{theorem}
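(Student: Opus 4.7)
The plan is to adapt the framework of Theorem \ref{t:landau}, taking into account two important features of the VPL system: the one-parameter scaling symmetry forces $\gamma=-3$ and $\theta=-1/3$, and the nonlocal Poisson term requires replacing the algebraic test functions used in Section \ref{s:landau} by the entropy $\ln g$ in order to annihilate the contribution of the self-consistent force.

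First I substitute the ansatz \eqref{e:ansatz-LCP} into the VPL system and change to self-similar variables $y=x/(-t)^{2/3}$, $w=(-t)^{1/3}v$. Since $1+\theta(3+\gamma)=1$, a direct computation analogous to the derivation of \eqref{e:expansion-L}, using the key scaling identities $\rho_{g\text{-part}}(t,x)=(-t)^{-2}\rho_g(y)$ with $\rho_g(y):=\int g(y,w)\dd w$, and $F[g\text{-part}](t,x)=(-t)^{-4/3}F_g(y)$ with $F_g(y):=C\int (y-y')\abs{y-y'}^{-3}\rho_g(y')\dd y'$, yields, after multiplying by $(-t)^2$:
\begin{equation*}
0 = g + \tfrac{2}{3}y\cdot\nabla_y g - \tfrac{1}{3}w\cdot\nabla_w g + w\cdot\nabla_y g + F_g(y)\cdot\nabla_w g - Q_L(g,g) + \mathcal{E}_1+\mathcal{E}_2+\mathcal{E}_F,
\end{equation*}
where $\mathcal{E}_1,\mathcal{E}_2$ are the same errors as in \eqref{e:expansion-L} (handled verbatim as in Lemmas \ref{lemma_E_1} and \ref{lemma_E_2}) and $\mathcal{E}_F$ collects the three Poisson cross terms $(-t)^{4/3}F[\phi]\cdot\nabla_w g$, $(-t)^{2/3}F_g(y)\cdot\nabla_v\phi$, and $(-t)^2 F[\phi]\cdot\nabla_v\phi$. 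A standard radius-splitting bound yields $\norm{F[\phi]}_{L^\infty_x}\lesssim\norm{\rho_\phi}_{L^1_x}^{1/3}\norm{\rho_\phi}_{L^\infty_x}^{2/3}$ which combined with assumption \eqref{e:lim-rho} at $p=1$ and $p=\infty$ gives $\norm{F[\phi]}_{L^\infty_x}=o((-t)^{-4/3})$; together with \eqref{e:lim-phi-t1} and \eqref{e:g-smooth} this suffices to show each term in $\mathcal{E}_F$ vanishes locally uniformly in $(y,w)$. Testing against $\psi\in C_c^\infty(\R^6)$ and sending $t\to 0$ then produces, by the smoothness of $g$, the pointwise limit equation
\begin{equation}\label{e:VPL-limit-plan}
g + \tfrac{2}{3}y\cdot\nabla_y g - \tfrac{1}{3}w\cdot\nabla_w g + w\cdot\nabla_y g + F_g(y)\cdot\nabla_w g = Q_L(g,g).
\end{equation}

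The final step is to show \eqref{e:VPL-limit-plan} admits only the trivial solution, where the choice of test function is critical: multiplying by $\abs{w}^2$, as was done for the analogous case $\theta=-1/3$ of Theorem \ref{t:landau}, would generate the term $-2\int F_g(y)\cdot w\,g\,\dd w\,\dd y$ that has no a priori sign. Instead I would multiply \eqref{e:VPL-limit-plan} by $\chi_{R_1}(y)\chi_{R_2}(w)\ln g$, integrate on $\R^6$, and send $R_1,R_2\to\infty$. Rewriting $\ln g\cdot\nabla g=\nabla(g\ln g - g)$ and integrating by parts, the dilation terms $\tfrac{2}{3}y\cdot\nabla_y g$ and $-\tfrac{1}{3}w\cdot\nabla_w g$ contribute respectively $-2\int(g\ln g - g)$ and $\int(g\ln g - g)$; the free-transport term $w\cdot\nabla_y g$ contributes zero (integrating in $y$ first); the Poisson term vanishes identically,
\begin{equation*}
\int_{\R^6}\ln g\,F_g(y)\cdot\nabla_w g\,\dd w\,\dd y = \int_{\R^6} F_g(y)\cdot\nabla_w(g\ln g - g)\,\dd w\,\dd y = 0,
\end{equation*}
because $F_g$ is $w$-independent (this is precisely why the entropy is the right test function); and by Lemma \ref{l:invariants} (entropy inequality) $\int_{\R^3}\ln g\,Q_L(g,g)\,\dd w\leq 0$ for each $y$. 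Collecting, the $\int g\ln g$ contributions cancel and the identity reduces to $\int_{\R^6} g\,\dd w\,\dd y\leq 0$, which together with $g\geq 0$ forces $g\equiv 0$.

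The main technical obstacle is justifying the integration by parts with $\ln g$ rigorously, both at the zero set of $g$ and when removing the cutoffs at infinity in $y$ and $w$. The assumption \eqref{e:sec_moment} is tailored precisely for this purpose: $(1+\abs{w})(g\ln g - g)\in L^1_{y,w}$ controls the boundary terms generated by the dilation vector fields as $R_1,R_2\to\infty$; $g\in L^1_{y,w}$ combined with the $L^\infty_{y,\mathrm{loc}}L^1_w$ bound from \eqref{e:g-smooth} gives enough integrability on $\rho_g$ to ensure $F_g$ is bounded and the Poisson identity above is meaningful; and $\nabla\sqrt{g}\in L^\infty_y L^2_w$ guarantees finiteness of the entropy dissipation integral and the validity of the inequality in Lemma \ref{l:invariants}. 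The adaptation to cases (b) and (c) of Section \ref{s:landau-poisson} is then a matter of verifying that the dominant self-similar contribution to the force is still $(-t)^{-4/3}F_g(y)$ with the appropriate definition of $F[f]$.
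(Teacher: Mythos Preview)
Your proposal is correct and follows essentially the same approach as the paper: substitute the ansatz, rescale, show the error terms vanish to obtain the limit profile equation, then test with $\ln g$ so that the Poisson contribution disappears and the entropy inequality from Lemma~\ref{l:invariants} yields $\int g \leq 0$. One small technical point: you invoke \eqref{e:lim-rho} at $p=1$, but the hypothesis only guarantees it for \emph{some} $p<3$ and for $p=\infty$; the paper instead uses the interpolation $(-t)^{4/3}\abs{F[\phi]}\lesssim \bigl((-t)^{1+1/p}\norm{\rho_\phi}_{L^p}\bigr)^{p/3}\bigl((-t)\norm{\rho_\phi}_{L^\infty}\bigr)^{1-p/3}$ for that unspecified $p$, which is the same radius-splitting idea with the correct exponent.
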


\begin{proof}
Define the self similar variables 
$$
y:= \frac{x}{(-t)^{2/3}}, \quad w := v(-t)^{1/3}.
$$
A brief computation shows that $F[f]$ transforms as
\[ 
F\left[\phi + \frac{1}{(-t)}g\right]\left( (-t)^{2/3}y \right) = F[\phi ]\left( (-t)^{2/3} y \right)  + \frac{1}{(-t)^{4/3}}F[g]\left( y \right) .
\]
%{\color{blue}{The integrals in $F[\phi]$ and $F[g]$ are well-defined since both $\phi(t,\cdot,\cdot)$ and $g$ lie in $L^1(\R^6)$. }}
%From now on, we write $y = x/(-t)^{2/3}$ and $w= (-t)^{1/3}v$ and analogous to above, $F[\phi]$ will always be evaluated at $(-t)^{2/3} w$. 
We now plug \eqref{e:ansatz-LCP} into the system \eqref{e:LCP}. The resulting equation, after multiplying by $(-t)^2$, reads as 
 \begin{align*}
&(-t)^2[\partial_t\phi + v\cdot \nabla_x \phi + F[\phi]\cdot \nabla_v \phi - Q_L(\phi,\phi)] \\
&+  {(-t)^{4/3}}  F[\phi]\cdot \nabla_w g + (-t)^{2/3} F[g]\cdot \nabla_v \phi  \\
& - (-t) [Q_{L,w}(\phi,g) + Q_{L,w}(g,\phi)]\\
&+  g + \frac{2}{3}y \cdot \nabla_y g -\frac{1}{3} w \cdot \nabla_w g  +  w \cdot \nabla_y g + F[g]\cdot \nabla_w g - Q_{L,w}(g,g) = 0.
\end{align*}
We now define the error as
\[\begin{split}
\mathcal E(\phi,g) :=  \;& (-t)^2[\partial_t\phi + v\cdot \nabla_x \phi + F[\phi]\cdot \nabla_v \phi - Q_L(\phi,\phi)] +  (-t)^{4/3}  F[\phi]\cdot \nabla_w g\\
& + (-t)^{2/3} F[g]\cdot \nabla_v \phi - (-t) [Q_{L,w}(\phi,g) + Q_{L,w}(g,\phi)].
\end{split} \]
We claim $\mathcal E(\phi,g) \to 0$ as $t\to 0$, uniformly on compact sets of $R^3_y\times \R^3_w$.  All the terms, except the ones with $F[\cdot]$, appeared already in $\mathcal E_1$ and $\mathcal E_2$ and converge to zero, as proven in Lemma \ref{lemma_E_1} and \ref{lemma_E_2}.  We start by analyzing 
 $$(-t)^2F[\phi]\cdot \nabla_v \phi. $$ We have 
\begin{align*}
 \lim_{t \to 0}  \sup_{\abs{w, y} \leq R}  |(-t)^2F[\phi]\cdot \nabla_v \phi | \le  \lim_{t \to 0}  \;\sup_{\abs{y} \leq R} (-t)^{4/3}|F[\phi]| (-t)^{2/3} \| \nabla_v \phi\|_{L^\infty_v} =0,
\end{align*} 
thanks to  \eqref{e:lim-phi-2} with $p = \infty$, $\ell = 0$, $j=1$. 
Note that $\sup_{\abs{y} \leq R} \;|F[g]| $ is bounded thanks to our assumption $g\in L^\infty_{y,loc}L^1_w$ and $g\in L^1_{y,w}$ and therefore we similarly have 
\begin{align*}
 \lim_{t \to 0}  \sup_{\abs{w, y} \leq R}  |(-t)^{2/3}F[g]\cdot \nabla_v \phi | \le  \lim_{t \to 0}  \sup_{\abs{y} \leq R} \;|F[g]| (-t)^{2/3} \| \nabla_v \phi\|_{L^\infty_v} =0.
\end{align*} 
We turn next to the term $(-t)^{4/3}F[\phi] \cdot \nabla_w g$. For this, we use the interpolation \eqref{ineq:absvstarin} with $s=-2$, $r = \infty$, and $1 \leq p < 3$ to obtain 
\begin{align*}
(-t)^{4/3}\abs{F[\phi]} & \lesssim (-t)^{4/3}\norm{\rho_{\phi}}_{L^p}^{p/3} \norm{\rho_{\phi}}_{L^\infty}^{1-p/3} \\ 
& \lesssim \left( (-t)^{1 + \frac{1}{p}} \norm{\rho_{\phi}}_{L^p} \right)^{p/3} \left( (-t)\norm{\rho_{\phi}}_{L^\infty} \right)^{1-p/3}, 
\end{align*}
and, hence, the associated term vanishes by \eqref{e:lim-rho}.

Thus, in the limit as $t\to 0$, we obtain
\begin{align}\label{g_LCP}
g + \frac 2 3 y \cdot \nabla_y g - \frac 1 3 w \cdot \nabla_w g  +  w \cdot \nabla_y g + F[g]\cdot \nabla_w g  = Q(g,g) .
\end{align}
From where, we multiply by $ \chi_{R_1}(w) \chi_{R_2}(y) \log g$ and integrate in both variables; after integration by parts we get 
\begin{align*}
\int_{\R^6} g \chi_{R_2}(y) \chi_{R_1}(w) \;dwdy& -\frac{2}{3} \int_{\R^6}  (g \ln g - g)\chi_{R_1}(w) y \cdot \nabla_y \chi_{R_2}(y) \;dwdy \\
+ \frac{1}{3} \int_{\R^6}  (g \ln g - g)\chi_{R_2}(y) w \cdot \nabla_w \chi_{R_1}(w) \;dwdy &- \int_{\R^6}  (g \ln g - g)\chi_{R_1}(w) w \cdot \nabla_y \chi_{R_2}(y) \;dwdy\\
&- \int_{\R^6}  (g \ln g - g)\chi_{R_2}(y)  F[g] \cdot  \nabla_w \chi_{R_1}(w)\;dwdy \\
&= \int_{\R^6}\chi_{R_2}(y) \chi_{R_1}(w)  Q_{L,w}(g,g)\;dwdy.
\end{align*}
With the assumptions on $g$, we can pass to the limit $R_1\to +\infty$ by the dominated convergence theorem and get 
\begin{align*}
\int_{\R^6} g \chi_{R_2}(y)  \;dwdy& -\frac{2}{3} \int_{\R^6}  (g \ln g - g)y \cdot \nabla_y \chi_{R_2}(y) \;dwdy \\
 &- \int_{\R^6}  (g \ln g - g) w \cdot \nabla_y \chi_{R_2}(y) \;dwdy\le 0,
\end{align*}
using Lemma \ref{l:invariants} for the right hand side. Thanks to the assumption
\begin{align*}
&{{(1+ w)(g \ln g - g) \in L^1_{w,y}}}, 
\end{align*}
we take the limit $R_2\to +\infty$ and obtain%. Passing to the limit $R \to \infty$ as in Lemma \ref{l:invariants} 
%(here using \eqref{e:entropy}) implies the result
\[
\int_{\R^6}  g \dd w\dd y \leq 0,
\]
which implies $g\equiv 0$.
\end{proof}

%{\color{blue}{This computation is not needed: We now multiply (\ref{g_LCP}) by $|w|^2$ and integrate over $\mathbb{R}^3 \times \mathbb{R}^3$. We get:
%\begin{align}\label{eq:g_II}
%(-2-8\theta) \int\int g |w|^2 \;dydw -8C\pi \int V \textrm{div}_y\int gw \;dwdy =0.
%\end{align}
%Next we multiply  (\ref{g_LCP}) by $V$ and integrate by parts: 
%\begin{align*}
%\int V \textrm{div}_y\int gw \;dwdy = (3\theta -1) \int \int gV \;dydw -(1+\theta) \int \int V y\cdot \nabla_y g\;dwdy.
%\end{align*}
%Since 
%$$
%\int \Delta V y \cdot \nabla_y V\;dy = -\frac{3}{2} \int |\nabla V|^2\;dy =- \frac{3}{2} \int \int gV\;dwdy,
%$$
%integration by parts yields 
%$$
%\int \int V y\cdot \nabla_y g\;dwdy =-3  \int \int gV\;dwdy + \int \Delta V y \cdot \nabla_y V\;dy = - \frac{9}{2} \int \int gV\;dwdy,
%$$
%which implies 
%\begin{align}\label{VC}
%\int V \textrm{div}_y\int gw \;dwdy = \left( (3\theta -1) + \frac{9}{2}(1+\theta)\right) \int \int gV \;dydw.
%\end{align}
%Plugging (\ref{VC}) into (\ref{eq:g_II}) we get 
%\begin{align*}
%(-2-8\theta) \int\int g |w|^2 \;dydw -8C\pi\left( (3\theta -1) + \frac{9}{2}(1+\theta)\right) \int \int gV \;dydw=0,
%\end{align*}
%which for $\theta = -\frac{1}{3}$ becomes 
%$$
%\frac{2}{3} \int\int g |w|^2 \;dydw  -8C\pi  \int \int gV \;dydw=0.
%$$
%If $C\le 0$ the above equation is satisfied if and only if $g=0$. This means we can rule out blow up for $\theta = -\frac{1}{3}$ for the gravitational case.  }}
%

\section{The Boltzmann equation} \label{s:boltzmann}

We recall the Boltzmann equation 
\begin{equation*}%\label{e:mainB}
\partial_t f + v\cdot \nabla_x f = Q_B(f,f) := \int_{\R^3} \int_{\mathbb S^{2}} B(v-v_*, \sigma) [f(v_*')f(v') - f(v_*)f(v)]\dd \sigma \dd v_*.
\end{equation*}
The velocities are related by the formulas
\begin{align}
v' &= \frac{v+v_*} 2 + \frac{|v-v_*|} 2 \sigma ,\\
v_*' &= \frac{v+v_*} 2 - \frac{|v-v_*|} 2 \sigma,
\end{align}
and the pre-post collisional angle $\eta$ (usually denoted $\theta$ in the literature) is defined by 
\[\cos \eta = \left\langle \frac{v-v_*}{|v-v_*|}, \sigma\right\rangle.\]
We take the standard non-cutoff collision kernel described by
\[ B(v-v_*,\sigma) :=  |v-v_*|^\gamma b(\cos \eta),\]
 for some $\gamma \in (-3,1]$, with the angular cross-section $b$ satisfying the asymptotics
\begin{align}
%B(v-v_*,\sigma) &= |v-v_*|^\gamma b(\cos \eta),\\ 
%\cos \eta &= \left\langle \frac{v-v_*}{|v-v_*|}, \sigma\right\rangle,\\
b(\cos\eta) &\approx \eta^{-2-2s} \quad \mbox{ as } \eta\to 0,
%v' &= \frac{v+v_*} 2 + \frac{|v-v_*|} 2 \sigma ,\\
%v_*' &= \frac{v+v_*} 2 - \frac{|v-v_*|} 2 \sigma,
\end{align}
for some $s\in (0,1)$. We assume $\gamma + 2s < 0$ for ease of presentation. Results similar to Theorem \ref{t:boltzmann} should also be available when $\gamma + 2s \geq 0$. 

%For the Boltzmann equation \eqref{e:mainB}, we have the same conclusion as Theorem \ref{t:landau}, under similar hypotheses. We assume $\gamma + 2s < 0$ for ease of presentation. A similar result should hold in the case $\gamma + 2s \geq 0$.

As mentioned above, the Boltzmann equation obeys the same family of scaling laws as the Landau equation, so the approximately self-semilar ansatz \eqref{e:ansatz} takes the same form.

In our main result for the Boltzmann equation, we derive the same conclusion as Theorem \ref{t:landau}, under similar hypotheses:

\begin{theorem}\label{t:boltzmann}
Let $\gamma > -3$ and $s\in (0,1)$ be such that $\gamma+2s < 0$, and assume $-1< \theta < 1/(2s)$.  Let $f$ be a smooth solution of the Boltzmann equation \eqref{e:mainB} that satisfies \eqref{e:f-condition} and \eqref{e:singularity}. 
Assume that $\phi$ satisfies \eqref{e:lim-phi-t1}. 
For $g$, assume it satisfies  \eqref{e:g-smooth} as well as $(1+|w|^{2+\gamma}) g(y,\cdot) \in L^1_w(\R^3)$ for all $y\in \R^3$, and that there exist $h$ and $q$ such that 
\begin{align*}
g(y,w) = q(w) + h(y,w),
\end{align*}
with 
$$(1+ \abs{y} + |w|)h\in L_{y,w}^1(\R^6)\quad \textrm{and} \quad q \in L_w^1(\R^3).$$ % and $h$  and $q$ both satisfy \eqref{e:g-smooth}. \\
Finally,  if $\theta = \pm1/3$ we additionally assume that 
$$(1+ \abs{y}\abs{w}^2 + |w|^3) h \in L^1_{y,w}(\R^6) \quad \textrm{and} \quad (1+ |w|^2) q \in L^1_w(\R^3).$$
	%\item[(b)] if $\theta = 1/3$, then $q$ satisfies the entropy conditions \eqref{e:entropy}.
%\end{enumerate}
Then, for any solution to the Boltzmann equation \eqref{e:mainB} of the form 
\begin{equation*}
f(t,x,v) = \phi(t,x,v) + \frac 1 {(-t)^{1+\theta(3+\gamma)}}\, g\left( \frac x {(-t)^{1+\theta}}, \frac v {(-t)^{\theta}}\right),
\end{equation*}
 we must have $g\equiv 0$.% and hence no approximate self-similar singularity of this type can occur.  
\end{theorem}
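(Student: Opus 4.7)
The overall strategy is to mimic the proof of Theorem~\ref{t:landau} as closely as possible, with the Landau bilinear form replaced by $Q_B$. Substituting the ansatz into \eqref{e:mainB}, rescaling to the self-similar variables $(y,w) = (x/(-t)^{1+\theta}, v/(-t)^\theta)$, and using the bilinearity
\[
Q_B(f,f) = Q_B(\phi,\phi) + Q_B(\phi, g_{\text{sc}}) + Q_B(g_{\text{sc}}, \phi) + Q_B(g_{\text{sc}}, g_{\text{sc}}),
\]
together with the scaling identity $Q_B(g_{\text{sc}}, g_{\text{sc}})(v) = (-t)^{-2-\theta(3+\gamma)} Q_{B,w}(g,g)(y,w)$ (which is obtained from the change of variables $v_* = (-t)^\theta w_*$ and the homogeneity of the kinetic factor $|v-v_*|^\gamma$), and multiplying through by $(-t)^{2+\theta(3+\gamma)}$, one arrives at an expansion analogous to \eqref{e:expansion-L}:
\[
0 = g + (1+\theta)y\cdot\nabla_y g + \theta w\cdot\nabla_w g + w\cdot\nabla_y g - Q_{B,w}(g,g) + \widetilde{\mathcal E}(\phi,g),
\]
where $\widetilde{\mathcal E}$ collects all terms involving $\phi$ together with the cross terms $Q_B(\phi,g_{\text{sc}})$ and $Q_B(g_{\text{sc}},\phi)$.

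The first main step is to prove the analogue of Lemmas~\ref{lemma_E_1} and~\ref{lemma_E_2}, namely $\widetilde{\mathcal E}(\phi,g)\to 0$ in $L^\infty_{y,w,\text{loc}}$ as $t\to 0$. For the purely $\phi$-dependent transport/collision terms, the decay is identical to the Landau case; what is new is the need for estimates on the non-cutoff collision operator with one smooth factor and one $L^p$ factor. I will rely on the now standard bounds (as in, e.g., Alexandre-Desvillettes-Villani-Wennberg or Silvestre) of the form
\[
\|Q_B(f,g)\|_{L^\infty_v} \lesssim \|f\|_{L^1_{\gamma+2s} \cap L^{p_1}} \|g\|_{W^{2s,p_2}},
\]
applied to $(\phi, g)$ and $(g,\phi)$ separately. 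The interpolation between $L^p$ and $L^\infty$ as in \eqref{3.11} yields powers of $(-t)$ of the form $(-t)^{1+\theta(3+\gamma)-3\theta/p}\|\phi\|_{L^p_v}$ (vanishing by \eqref{e:lim-phi-t1}) times a residual factor whose exponent is positive precisely when $\theta < 1/(2s)$; this is the role played by the upper bound on $\theta$, exactly as $\theta<1/2$ arose in the Landau proof through the order-$2$ diffusion. The hypothesis $(1+|w|^{2+\gamma})g(y,\cdot)\in L^1_w$ guarantees that the kinetic weight in $Q_B$ is integrable against $g$.

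Passing to the limit $t\to 0$ against smooth test functions yields, in the sense of distributions (and hence pointwise, by the smoothness of $g$),
\[
g + (1+\theta)y\cdot\nabla_y g + \theta w\cdot\nabla_w g + w\cdot\nabla_y g = Q_{B,w}(g,g).
\]
From here the argument proceeds exactly as in the Landau case. Introducing cutoffs $\chi_R(w)\varphi_{y_0}(y)$ (or $\chi_R(w)|w|^2\varphi_{y_0}(y)$ when $\theta=\pm 1/3$), integrating by parts in $y$ and $w$, and sending $R\to\infty$, the collision term drops out by the classical weak formulation of $Q_B$ that makes the collision invariants $1$, $w$, $|w|^2$ integrate to zero. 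The decay assumptions on $h$ and $q$ then allow us to take $y_0\to\infty$ (for the $\theta\neq\pm 1/3$ case) to isolate the $q$-equation, concluding $q\equiv 0$, and a second cutoff limit in $y$ then forces $h\equiv 0$. The weight $|w|^2$ version handles the degenerate values $\theta=\pm 1/3$ exactly as in the Landau proof.

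The main obstacle is the first step: establishing the Boltzmann analogue of Lemma~\ref{l:coeffs} in a form strong enough to control each of the cross-collision terms $Q_B(\phi,g_{\text{sc}})$, $Q_B(g_{\text{sc}},\phi)$, and $Q_B(\phi,\phi)$ by quantities that can be absorbed by \eqref{e:lim-phi-t1}. Because the non-cutoff operator is not as directly decomposable as the Landau operator $\tr(\bar a^f D_v^2 f) + \bar c^f f$, one cannot split into "coefficient" and "derivative" parts; instead one must use weak-formulation estimates that exploit the cancellation between gain and loss terms, and carefully track the loss of velocity decay induced by the kinetic factor $|v-v_*|^\gamma$. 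The constraint $\gamma+2s<0$ is used exactly here, ensuring that the effective diffusion order $2s$ dominates the kinetic singularity in the relevant interpolation, and produces the clean upper bound $\theta<1/(2s)$.
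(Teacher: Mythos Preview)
Your overall strategy---substitute the ansatz, rescale, show the error vanishes, obtain the profile equation, then repeat the Landau cutoff argument using the collision invariants---matches the paper exactly, and the second half (after the profile equation) is indeed identical to the Landau case.

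The substantive difference is in how the error terms are handled. You assert that the non-cutoff operator ``is not as directly decomposable as the Landau operator $\tr(\bar a^f D_v^2 f) + \bar c^f f$'' and therefore propose to use general bilinear estimates of AMUXY/Silvestre type on $Q_B(\phi,g)$ and $Q_B(g,\phi)$. In fact the paper does precisely the decomposition you say is unavailable: it writes $Q_B = Q_1 + Q_2$ by adding and subtracting $f_1(v_*')f_2(v)$, where $Q_2(f_1,f_2)$ is \emph{literally} a constant multiple of $\bar c^{f_1} f_2$ (Lemma~\ref{l:Q2}), so those terms are treated identically to Landau. The singular part $Q_1$ is then rewritten via Silvestre's kernel representation (Lemma~\ref{l:Q1}) as $\int [f_2(v+h)-f_2(v)]K_{f_1}(v,h)\,dh$ with the explicit annulus bound $\int_{B_{2r}\setminus B_r} K_{f_1} \lesssim \|f_1\|_{L^p}^{(\gamma+2s+3)p/3}\|f_1\|_{L^\infty}^{1-(\gamma+2s+3)p/3} r^{-2s}$. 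This plays the exact role of \eqref{ineq:aloinf} in the Landau proof, and the dyadic sum over annuli (splitting $|h|<1$ versus $|h|\ge 1$, using a Taylor expansion and the symmetry $K_{f_1}(v,-h)=K_{f_1}(v,h)$ for the small-$h$ piece) replaces the pointwise bound on $\bar a^\phi$. The constraint $\theta<1/(2s)$ then falls out of the condition $3\theta/(1+\theta(3+\gamma)) < 3/(\gamma+2s+3)$, in perfect analogy with the Landau computation that produced $\theta<1/2$.

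Your proposed route via off-the-shelf bilinear estimates would likely also close, but is less transparent about where $1/(2s)$ comes from and requires tracking Sobolev norms of $\phi$ rather than just the $L^p_v$ norms appearing in \eqref{e:lim-phi-t1}. The paper's decomposition keeps the argument pointwise and structurally parallel to Landau throughout.
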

\begin{remark}
As for Landau, if $g \in L^1_{y,w}$, then it suffices to assume $(1+ |w|)g \in L^1_{y,w}$ (and $(1+ |w|^3) g \in L^1_{y,w}(\R^6)$). 
\end{remark}

%\begin{theorem}\label{t:boltzmann}
%	Let $\gamma > -3$ and $s\in (0,1)$ be such that $\gamma+2s < 0$, and assume $-1< \theta < 1/(2s)$. Assume that $f$, $\phi$, and $g$ satisfy \eqref{e:f-condition} and \eqref{e:singularity}. Assume that $\phi$ satisfies \eqref{e:lim-phi-t1}, and that $g$ satisfies \eqref{e:g-smooth}. 
%	For $g$, assume either that
%	\begin{enumerate}
%		\item[(a)] $g\in L^1(\R^6)$, and if $\theta = -1/3$,  \eqref{e:entropy2} holds.
%		\item[(b)] $g\in L^1_w(\R^3)$ is independent of $y$, and if $\theta = 1/3$, \eqref{e:entropy2} holds.
%	\end{enumerate} 
%	Then, for any solution to the Boltzmann equation \eqref{e:mainB} of the form \eqref{e:ansatz}, we must have $g\equiv 0$.
%
%\end{theorem}
 
 Specializing to the homogeneous case as above, we have the following result:
\begin{theorem}\label{theom:Boltzmann_Hom}
 With $\gamma$ and $s$ as in Theorem \ref{t:boltzmann}, and $\frac 1 {|\gamma|} < \theta < \frac 1 {2s}$, assume that $f = f(v,t)$ has finite mass and second moment and satisfies 
\begin{align*}%\label{e:singularity}
 f \in C^\infty((-T,0) \times  \R^3), f \in C^\infty((-T,0] \times \R^3 ).
\end{align*}%\eqref{e:singularity}.
Assume that $\phi$ satisfies \eqref{e:lim-phi-t1}, and that $g = g(v)$ is such that 
$$
g\in C^{\infty}(\R^3), \quad (1+|w|^{2+\gamma}) g \in L^1_w.%, \quad D^j_{w,loc} g \in L^p_w,
$$
%for all $p\ge 1$ and $0\le j \le 2$. 
If $\theta = 1/3$, then additionally assume that $g$ satisfies $ (1+|w|^2)g \in L^1_w$. 

Then, for any solution to the homogeneous Boltzmann equation of the form 
\begin{equation*}
f(t,v) = \phi(t,v) + \frac 1 {(-t)^{1+\theta(3+\gamma)}}\, g\left( \frac v {(-t)^{\theta}}\right),
\end{equation*}
we must have $g \equiv 0$, and hence no approximate self-similar singularity of this type can occur. 
\end{theorem}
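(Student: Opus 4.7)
The plan is to follow the blueprint of the homogeneous Landau argument (Theorem \ref{theom:Landau_Hom}), replacing each Landau-specific ingredient by its non-cutoff Boltzmann counterpart. First I plug the ansatz
\begin{equation*}
f(t,v) = \phi(t,v) + G(t,v), \qquad G(t,v) := (-t)^{-(1+\theta(3+\gamma))} g\!\left(v/(-t)^\theta\right),
\end{equation*}
into \eqref{e:mainB} (with $v\cdot\nabla_x$ absent) and introduce the self-similar variable $w = v/(-t)^\theta$. By bilinearity, $Q_B(f,f) = Q_B(\phi,\phi) + Q_B(\phi,G) + Q_B(G,\phi) + Q_B(G,G)$. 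The change of variables $v_* = (-t)^\theta w_*$ in the collision integral, together with the equivariance of the pre/post-collisional map under dilations and the scale-invariance of $b(\cos\eta)$, yields the exact scaling
\begin{equation*}
Q_B(G,G)(t,v) = (-t)^{-(2+\theta(3+\gamma))} Q_B(g,g)(w).
\end{equation*}
After multiplying the equation by $(-t)^{2+\theta(3+\gamma)}$ and rearranging, I arrive at the profile identity
\begin{equation*}
g(w) + \theta w \cdot \nabla_w g(w) - Q_B(g,g)(w) = \mathcal E(\phi,g),
\end{equation*}
where $\mathcal E$ gathers the purely $\phi$-driven terms and the two cross collision terms.

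Second, I would prove that $\mathcal E(\phi,g) \to 0$ locally uniformly in $w$ as $t \to 0^-$. The terms $(-t)^{2+\theta(3+\gamma)}\partial_t\phi$ and $(-t)^{2+\theta(3+\gamma)} Q_B(\phi,\phi)$ are handled exactly as in Lemma \ref{lemma_E_2}, using \eqref{e:lim-phi-t1} together with a standard non-cutoff Boltzmann upper bound of the form $\|Q_B(\phi,\phi)\|_{L^\infty_v} \lesssim \|\phi\|_{L^p_v}^{\alpha_1}\|\phi\|_{L^\infty_v}^{\alpha_2}$ for an appropriate $p$ and exponents $\alpha_1,\alpha_2$. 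For the cross terms, a self-similar change of variables transforms them into expressions of the schematic form
\begin{equation*}
(-t)^{-(1+\theta(3+\gamma))+2s\theta}\, \widetilde Q_B\!\bigl(\phi\bigl(t,(-t)^\theta\,\cdot\bigr),\, g\bigr)(w),
\end{equation*}
the exponent $2s\theta$ reflecting the fractional velocity diffusion of order $2s$ generated by $Q_B$. The hypothesis $\theta < 1/(2s)$ is precisely what makes the combined power of $(-t)$ strictly positive after multiplication by $(-t)^{2+\theta(3+\gamma)}$, while the smoothness and integrability of $g$ in \eqref{e:g-smooth}, combined with the decay \eqref{e:lim-phi-t1} on $\phi$, close the estimate.

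Third, passing to the distributional limit and then pointwise via the smoothness of $g$ yields the stationary profile equation
\begin{equation*}
g(w) + \theta w\cdot \nabla_w g(w) - Q_B(g,g)(w) = 0, \qquad w \in \R^3.
\end{equation*}
Testing against a cutoff $\chi_R(w) = \chi(|w|/R)$ and integrating by parts in the drift term gives
\begin{equation*}
(1-3\theta)\int \chi_R\, g\, \dd w \;=\; \int \chi_R\, Q_B(g,g)\, \dd w \;+\; \theta \int g\, w\cdot \nabla_w \chi_R\, \dd w.
\end{equation*}
The Boltzmann analog of Lemma \ref{l:invariants},
\begin{equation*}
\lim_{R\to\infty} \int_{\R^3}\chi_R(w)\, Q_B(g,g)(w)\, \dd w = 0,
\end{equation*}
is proved via Carleman's weak formulation and the elementary bound $|\chi_R(v')+\chi_R(v'_*)-\chi_R(v)-\chi_R(v_*)| \lesssim R^{-2}|v-v_*|^2 \sin^2(\eta/2)$, combined with the hypothesis $(1+|w|^{\gamma+2}) g \in L^1_w$, which exactly absorbs the remaining $|v-v_*|^{\gamma+2}$ factor produced when two powers of $\sin(\eta/2)$ are used to tame $b(\cos\eta)$. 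Sending $R \to \infty$ yields $(1-3\theta)\int g\, \dd w = 0$. For $\theta \neq 1/3$ we obtain $\int g\, \dd w = 0$, and since $g \geq 0$ (which follows from $f \geq 0$ and \eqref{case2} at $p=\infty$), we conclude $g \equiv 0$. In the critical case $\theta = 1/3$, the same strategy with $|w|^2\chi_R$ in place of $\chi_R$, together with the supplementary assumption $(1+|w|^2) g \in L^1_w$ and conservation of kinetic energy $\int |w|^2 Q_B(g,g)\,\dd w = 0$ (again justified through Carleman's formulation), forces $\int |w|^2 g\, \dd w = 0$ and hence $g\equiv 0$.

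The main obstacle lies in the second step: controlling the cross terms $Q_B(\phi, G)$ and $Q_B(G,\phi)$ without the pointwise diffusion-coefficient bounds used in the Landau argument. This requires sharp bilinear estimates on $Q_B$ that are compatible with the self-similar scaling, and the upper bound $\theta < 1/(2s)$ is used precisely at this point, playing the role that $\theta < 1/2$ plays for Landau. A secondary technical point is the justification of the conservation identities for $Q_B$ under only the stated integrability: the weighted hypothesis $(1+|w|^{\gamma+2}) g \in L^1_w$ is tailored to make the Carleman-type computation close despite the singularity of the cross-section in the very soft regime $\gamma + 2s < 0$.
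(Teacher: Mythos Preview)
Your proposal is correct in outline and follows essentially the same route as the paper, which obtains the homogeneous result by specializing the inhomogeneous proof of Theorem~\ref{t:boltzmann}: the same profile equation, the same weak-form conservation identities (the paper's Lemma~\ref{l:Binvariants}), and the same cutoff testing. Two small points are worth noting. First, the scaling exponent you wrote for the cross term has a sign slip: the $Q_1(\phi,G)$ piece carries $(-t)^{-(1+\theta(3+\gamma))-2s\theta}$, not $+2s\theta$, so after multiplying by $(-t)^{2+\theta(3+\gamma)}$ one gets $(-t)^{1-2s\theta}$; this is what actually forces $\theta<1/(2s)$, and your stated conclusion is consistent with the correct sign. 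Second, where you invoke ``sharp bilinear estimates on $Q_B$,'' the paper makes this concrete via Silvestre's decomposition $Q_B=Q_1+Q_2$ (Lemmas~\ref{l:Q1} and~\ref{l:Q2}) together with the annulus bound \eqref{e:kernel-bound} on the kernel $K_{f_1}$; this is the explicit tool that replaces the Landau coefficient bounds and lets one interpolate between $L^p_v$ and $L^\infty_v$ norms of $\phi$ so that \eqref{e:lim-phi-t1} applies.
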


To prove Theorem \ref{t:boltzmann}, we need the following decomposition of the collision operator $Q_B(f_1,f_2)$ into two terms: by adding and subtracting $f_1(v_*')f_2(v)$ inside the integral, we write $Q_B(f_1,f_2) = Q_1(f_1,f_2) + Q_2(f_1,f_2)$, with
\begin{equation}\label{e:decomposition}
\begin{split}
Q_1(f_1,f_2) &= p.v. \int_{\R^3} \int_{\mathbb S^2} b(\cos\eta) |v-v_*|^\gamma(f_2(v') - f_2(v)) f_1(v_*') \dd \sigma \dd v_*,\\
Q_2(f_1,f_2) &= f_2(v)\int_{\R^3} \int_{\mathbb S^2} b(\cos\eta) |v-v_*|^\gamma(f_1(v_*') - f_1(v_*)) \dd \sigma \dd v_*,
\end{split}
\end{equation}
for functions $f_1$ and $f_2$ defined on $\R^3$. %This decomposition was first introduced in \cite{villani2002review} (Chapter 6.2) and, later, has been used extensively in the conditional regularity program which began in  \cite{silvestre2016boltzmann} and continued in \cite{imbert2016weak, imbert2018decay, cameron2019boltzmann, imbert2019smooth}. 
The term $Q_1(f_1,f_2)$ acts as a fractional differential operator of order $2s$, and can roughly be thought of as analogous to the term $\tr(\bar a^{f_1} D_v^2 f_2)$ from the Landau collision operator. The following lemma, quoted from \cite{silvestre2016boltzmann}, makes this point of view clearer:
\begin{lemma}{\cite[Section 4]{silvestre2016boltzmann}}\label{l:Q1}
There holds
\[ Q_1(f_1,f_2) = \int_{\R^3} [f_2(v+h) - f_2(v)]K_{f_1}(v,h) \dd h,\]
where
\[K_{f_1}(v,h) \approx |h|^{-3-2s} \int_{\{z:z\cdot h = 0\}} f_2(v+z) |z|^{\gamma+2s+1} \dd z,\]
with implied constants depending only on $\gamma$, $s$, and the angular cross-section $b$. The kernel $K_{f_1}$ is symmetric ($K_{f_1}(v,-h) = K_{f_1}(v,h)$) and satisfies the following bounds: for any $r>0$,
\[ \int_{B_{2r}\setminus B_r} K_{f_1} (v,h) \dd h \leq C\left(\int_{\R^3} |z|^{\gamma+2s} f_1(v+z) \dd z \right) r^{-2s},\]
whenever the right-hand side is finite.
\end{lemma}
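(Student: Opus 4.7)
The result is a restatement of the Carleman representation for the non-cutoff Boltzmann operator together with the associated kernel estimate; its proof can be organized in three steps.

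\emph{Step 1 (Carleman change of variables).} For each fixed $v \in \R^3$, parametrize $(v_*, \sigma) \in \R^3 \times \mathbb{S}^{2}$ by $(h, z) \in \R^3 \times h^\perp$ where $h = v' - v$ and $z$ is defined by the relation $v_* = v + h - z$. Conservation of kinetic energy $|v-v_*| = |v'-v_*'|$ forces $z \perp h$, and elementary calculations yield
\begin{equation*}
|v - v_*|^2 = |h|^2 + |z|^2, \qquad \sin\tfrac{\eta}{2} = \frac{|h|}{\sqrt{|h|^2 + |z|^2}}, \qquad d\sigma\, dv_* = \frac{2\,dh\,dz}{|h|\sqrt{|h|^2+|z|^2}},
\end{equation*}
the last being the standard Carleman Jacobian.

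\emph{Step 2 (kernel identification).} Inserting these formulas into the definition \eqref{e:decomposition} of $Q_1$ and observing that $v_*' = v_* - h = v - z$, one gathers every factor not involving $f_2$ into a single integral over $h^\perp$, producing
\begin{equation*}
Q_1(f_1,f_2)(v) = \int_{\R^3} [f_2(v+h) - f_2(v)] K_{f_1}(v,h)\,dh,
\end{equation*}
with
\begin{equation*}
K_{f_1}(v,h) = \int_{h^\perp} f_1(v-z)\, b\!\left(\tfrac{|z|^2 - |h|^2}{|z|^2 + |h|^2}\right) (|h|^2+|z|^2)^{\gamma/2}\, \frac{2\,dz}{|h|\sqrt{|h|^2+|z|^2}}.
\end{equation*}
Using the angular asymptotics $b(\cos\eta) \approx \sin^{-2-2s}(\eta/2) \approx |h|^{-2-2s}(|h|^2+|z|^2)^{1+s}$ and noting that the hypothesis $\gamma + 2s < 0$ forces the integral to be dominated by the grazing regime $|z| \gtrsim |h|$, the prefactor simplifies to
\begin{equation*}
K_{f_1}(v,h) \approx |h|^{-3-2s} \int_{h^\perp} f_1(v-z)\,|z|^{\gamma+2s+1}\,dz,
\end{equation*}
which, after the substitution $z \mapsto -z$ (leaving $h^\perp$ invariant), is the formula stated in the lemma. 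The symmetry $K_{f_1}(v,-h) = K_{f_1}(v,h)$ is then immediate since $(-h)^\perp = h^\perp$ and the integrand depends only on $|z|$. The principal value interpretation of $Q_1$ is justified by this symmetry together with a Taylor expansion of $f_2$, which converts the $|h|^{-3-2s}$ singularity into an integrable $|h|^{-1-2s}$ one.

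\emph{Step 3 (shell estimate).} In spherical coordinates $h = \rho \omega$ with $\rho > 0$, $\omega \in \mathbb{S}^{2}$,
\begin{equation*}
\int_{B_{2r}\setminus B_r} K_{f_1}(v,h)\,dh \lesssim \int_r^{2r} \rho^{-1-2s}\,d\rho \int_{\mathbb{S}^{2}}\!\int_{\omega^\perp} f_1(v+z)|z|^{\gamma+2s+1}\,dz\,d\omega.
\end{equation*}
The $\rho$-integral contributes a factor $C r^{-2s}$. For the angular-planar integral, one uses the coarea identity
\begin{equation*}
\int_{\mathbb{S}^{2}}\!\int_{\omega^\perp} \Phi(z)\,dz\,d\omega = 2\pi \int_{\R^3} \Phi(z)\,|z|^{-1}\,dz,
\end{equation*}
obtained by writing $z = \rho \tau$ with $\tau \in \omega^\perp \cap \mathbb{S}^{2}$ and exchanging the integration order over $\tau \in \mathbb{S}^{2}$ and $\omega \in \mathbb{S}^{2}\cap \tau^\perp$, the latter being a great circle of length $2\pi$. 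Applied to $\Phi(z) = f_1(v+z)|z|^{\gamma+2s+1}$, this yields the bound $C r^{-2s} \int_{\R^3} f_1(v+z)\,|z|^{\gamma+2s}\,dz$, completing the proof.

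The principal technical obstacle is the careful bookkeeping of the Carleman Jacobian with the angular asymptotics of $b$, which must be uniform in the ratio $|z|/|h|$; the assumption $\gamma + 2s < 0$ is used precisely to ensure that the equivalence $K_{f_1}(v,h) \approx |h|^{-3-2s} \int_{h^\perp} f_1(v+z)|z|^{\gamma+2s+1}\,dz$ holds, since otherwise lower-order contributions from $|z|\lesssim |h|$ would perturb the stated form. All remaining steps are direct computations.
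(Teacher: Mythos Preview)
The paper does not give a proof of this lemma; it is quoted from Section~4 of \cite{silvestre2016boltzmann}. Your three-step outline---Carleman change of variables, identification of the kernel via the angular asymptotics of $b$, and the annulus bound via the coarea identity on $\mathbb{S}^2$---is precisely the argument in that reference, and each step is carried out correctly.

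Two minor remarks. First, the two-sided equivalence for $K_{f_1}$ in \cite{silvestre2016boltzmann} does not actually hinge on the hypothesis $\gamma+2s<0$: after the standard symmetrization $\sigma\mapsto-\sigma$ restricting to $\eta\in[0,\pi/2]$, one has $|z|\geq|h|$ throughout the Carleman integral, so the ``lower-order contributions from $|z|\lesssim|h|$'' you worry about simply do not arise, and $(|h|^2+|z|^2)^{(\gamma+2s+1)/2}\approx |z|^{\gamma+2s+1}$ holds uniformly. The assumption $\gamma+2s<0$ is an ambient hypothesis of the present paper for other purposes. Second, you have tacitly corrected a typo in the stated lemma: the integrand of the kernel should read $f_1(v+z)$ rather than $f_2(v+z)$.
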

Estimating the convolution as in the proof of Lemma \ref{l:coeffs}, we see that Lemma \ref{l:Q1} implies
\begin{equation}\label{e:kernel-bound}
% \int_{B_{2r}\setminus B_r} K_{f_1} (v,h) \dd h \leq C \|f_1\|_{L^1_v}^{(\gamma+2s+3)/3} \|f_1\|_{L^\infty_v}^{-(\gamma+2s)/3} r^{-2s},
\int_{B_{2r}\setminus B_r} K_{f_1} (v,h) \dd h \leq C \|f_1\|_{L^p_v}^{(\gamma+3+2s)p/3} \|f_1\|_{L^{\infty}_v}^{1-(\gamma+3+2s)p/3} r^{-2s},
\end{equation}
for all $r>0$ and $1\leq p < 3/(\gamma+3+2s)$. For $f_1$ depending on $(x,v)$ or $(t,x,v)$, we will write $K_{f_1}(x,v,h)$ or $K_{f_1}(t,x,v,h)$. Note that the ``p.v.'' in $Q_1$ is only necessary when $s>1/2$. We omit the ``p.v'' from now on, since our functions are smooth enough ($C^2$ in $v$) that the value of the integral is well-defined.

%\begin{lemma}{\cite[Lemma 2.3]{imbert2019lowerbounds}}\label{l:C2}
%	If $f_1:\R^3\to \R_+$ satisfies 
%	\[ \int_{\R^3} f_1(v_*) |v-v_*|^{\gamma+2s} \dd v_* < \Lambda,\]
%	and $f_2\in C^2_v(\R^3)$, then
%	\[ |Q_{1}(f_1,f_2)| \lesssim \Lambda \|f_2\|_{L^\infty(\R^3)}^{1-s}\|D_v^2 f_2\|_{L^\infty(\R^3)}^s.\]
%\end{lemma}

For $Q_2(f_1,f_2)$, symmetry effects for grazing collisions (see \cite{alexandre2000entropy}) imply the following representation: %will use upper bounds that take advantage of symmetry effects for grazing collisions ($\eta \approx 0$), which is by now a well-understood phenomenon, see \cite{alexandre2000entropy, villani1999boltzmann, silvestre2016boltzmann}. The following estimate, taken from \cite{silvestre2016boltzmann}, is in a convenient form for our proof:
\begin{lemma}{\cite[Lemmas 5.1 and 5.2]{silvestre2016boltzmann}}\label{l:Q2}
The integral in $Q_2(f_1,f_2)$ satisfies
\[ Q_2(f_1,f_2) = f_2(v) \int_{\R^3} f_1(v+z) (C|z|^{\gamma}) \dd z,\]
where the constant $C$ depends only on $\gamma$ and $s$.
\end{lemma}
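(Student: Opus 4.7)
The plan is to exploit the non-cutoff grazing cancellation by performing a change of variables $v_* \mapsto v_*'$ (for fixed $\sigma$) in the first half of $Q_2$'s integrand, after which both halves combine into a local multiplier on $f_2(v)$ times a radial convolution of $f_1$.

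First, I regularize by a cutoff of grazing angles, $b_\epsilon(\cos\eta) := b(\cos\eta)\1_{\eta>\epsilon}$, so that the two halves
\begin{equation*}
A^\epsilon := \int_{\R^3}\!\int_{\mathbb S^2} b_\epsilon |v-v_*|^\gamma f_1(v_*')\dd\sigma\dd v_*, \qquad B^\epsilon := \int_{\R^3}\!\int_{\mathbb S^2} b_\epsilon |v-v_*|^\gamma f_1(v_*)\dd\sigma\dd v_*
\end{equation*}
are absolutely convergent. In $A^\epsilon$, for each fixed $\sigma$, I substitute $v_* \mapsto u := v_*'$. Using $v-v_*' = \tfrac{|v-v_*|}{2}\bigl(\tfrac{v-v_*}{|v-v_*|}+\sigma\bigr)$, a direct computation gives the Jacobian $|\det(\partial v_*'/\partial v_*)| = \cos^2(\eta/2)/4$, the distance relation $|v-v_*'| = |v-v_*|\cos(\eta/2)$, and the angular identity $\eta = 2\eta'$, where $\eta'$ is the angle between $\sigma$ and $(v-u)/|v-u|$. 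The map is a bijection from $\R^3_{v_*}$ onto the half-space $\{u:(v-u)\cdot\sigma\geq0\}$, i.e.\ $\eta'\in(0,\pi/2]$.

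Relabelling $u\to v_*$, $\eta'\to\eta$, and applying Fubini, both halves collapse to $f_2(v)$ times a single convolution of $f_1$ against $|v-v_*|^\gamma$:
\begin{equation*}
Q_2^\epsilon(f_1,f_2) = f_2(v)\, K^\epsilon \int_{\R^3} f_1(v_*)|v-v_*|^\gamma \dd v_*, \quad K^\epsilon = 2\pi\int_\epsilon^\pi b(\cos\tilde\eta)\bigl[\cos^{-\gamma-3}(\tilde\eta/2) - 1\bigr]\sin\tilde\eta \dd\tilde\eta,
\end{equation*}
after the further substitution $\tilde\eta = 2\eta$ inside the transformed $A^\epsilon$ (the halved range $\eta\leq\pi/2$ then covers $\tilde\eta\leq\pi$). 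Taylor expansion near $\tilde\eta=0$ gives $\cos^{-\gamma-3}(\tilde\eta/2)-1 = O(\tilde\eta^2)$, while $b(\cos\tilde\eta)\sin\tilde\eta \sim \tilde\eta^{-1-2s}$, so the integrand is $O(\tilde\eta^{1-2s})$, integrable for $s<1$. Thus $K^\epsilon\to C=C(\gamma,s)$ is finite.

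Passing $\epsilon\to 0$ in the unregularized identity uses dominated convergence for $b_\epsilon[f_1(v_*')-f_1(v_*)]$: smoothness of $f_1$ together with the $\sigma$-symmetric second-order Taylor expansion around $v_*$ majorises this integrand uniformly in $\epsilon$ (as in the standard definition of $Q_B$), while decay of $f_1$ controls the tails via the outer convolution. The substitution $z=v_*-v$ then rewrites the result as $f_2(v)\int_{\R^3}f_1(v+z)(C|z|^\gamma)\dd z$, as claimed. The main obstacle is the delicate cancellation between the two individually divergent pieces in $K^\epsilon$: the $\epsilon^{-2s}$ coefficients in the transformed and untransformed pieces must match exactly, and this match is guaranteed only by the precise combined bookkeeping of the Jacobian $\cos^2(\eta/2)/4$ with the halved range $\eta'\leq\pi/2$ in $A^\epsilon$, which together produce the factor $\cos^{-\gamma-3}(\tilde\eta/2)-1$ vanishing to second order at grazing---exactly the structure needed for a finite non-cutoff limit.
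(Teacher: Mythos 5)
The paper does not prove this lemma; it simply cites \cite[Lemmas 5.1 and 5.2]{silvestre2016boltzmann} and attributes the underlying ``symmetry effects for grazing collisions'' to \cite{alexandre2000entropy}. Your proof is a correct and essentially self-contained reproduction of the classical Alexandre--Desvillettes--Villani--Wennberg cancellation lemma, which is exactly the argument behind the cited result. All the key ingredients check out: the Jacobian $|\det(\partial v_*'/\partial v_*)|=\tfrac14(1+\cos\eta)=\cos^2(\eta/2)/4$ follows from $\partial v_*'/\partial v_*=\tfrac12(\Id+\sigma\otimes k)$ with $k=(v-v_*)/|v-v_*|$; the relations $|v-v_*'|=|v-v_*|\cos(\eta/2)$ and $\eta=2\eta'$ are correct; the image being the half-space $\{u:(v-u)\cdot\sigma\geq 0\}$ is correct modulo a measure-zero boundary; and after swapping the order of integration (hemisphere $\eta'\in[0,\pi/2)$ for each $u$), the substitution $\tilde\eta=2\eta'$ plus $\sin(\tilde\eta/2)=\sin\tilde\eta/(2\cos(\tilde\eta/2))$ gives exactly the multiplier $K^\epsilon=2\pi\int_\epsilon^\pi b(\cos\tilde\eta)\sin\tilde\eta\left[\cos^{-\gamma-3}(\tilde\eta/2)-1\right]\dd\tilde\eta$, whose integrand is $O(\tilde\eta^{1-2s})$ near the origin and hence converges as $\epsilon\to0$ for $s<1$ and $\gamma>-3$. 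The passage $\epsilon\to0$ on the unregularized side by dominated convergence (using azimuthal symmetry and second-order Taylor expansion of $f_1$) is also the standard justification. In short, you have filled in the proof the paper omits, following the route the paper's references take.
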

%\begin{proof}
%By \cite{silvestre2016boltzmann}, Lemmas 5.1 and 5.2, $Q_2(f_1,f_2)$ can be written
%\[ Q_2(f_1,f_2) = f_2(v) \int_{\R^3} f_1(v+z) \tilde B(|z|) \dd z,\]
%with $\tilde B(|z|) = C|z|^{\gamma}$, for some $C$ depending on $\gamma$ and $s$. A straightforward estimate of the integral $\int f_1(v+z) |z|^\gamma \dd z$ implies the conclusion of the lemma.
%%\[ \int_{\R^3} f_1(v+z)|z|^\gamma \dd z \lesssim  \]
%\end{proof}
In other words, surprisingly, $Q_2(f_1,f_2)$ is equal up to a constant to $\bar c^{f_1} f_2$ in the notation of the Landau equation.

Lemmas \ref{l:Q1} and \ref{l:Q2} imply in particular that $Q_B(f_1,f_2)$ is well-defined in a pointwise sense whenever $f_1 \in L^1_v \cap L^\infty_v$ and $f_2 \in L^\infty_v \cap C^2_v$. As in the previous sections (see Lemma \ref{l:invariants}), we need to use a form of the identities $\int Q_B(g,g) \dd w = \int |w|^2 Q_B(g,g) \dd w = 0$:
%This inequality follows from the pre-post collisional change of variables $(w,w_*)\mapsto (w',w_*')$ applied to the formula for $Q_B$ seen in \eqref{e:mainB}. 
\begin{lemma}\label{l:Binvariants}
With $\chi\in C^\infty(B(0,2))$ a smooth-cutoff with $\chi(|x|) = 1$ for $|x|\leq 1$, and with $g\in L^1_x\cap L^\infty_w (\R^3)$ satisfying \eqref{e:g-smooth} as well as $(1+|w|^{2+\gamma}) g \in L^1$, we have
\[ \lim_{R\to \infty} \int_{\R^3} \chi\left(\frac w R\right) Q_B(g,g) \dd w = 0,\]
%If, in addition, $g$ satisfies $|w|^2 g \in L^1_w$, then we have
and
\[ \lim_{R\to \infty} \int_{\R^3} \chi\left(\frac w R\right)|w|^2 Q_B(g,g) \dd w = 0.\]
\end{lemma}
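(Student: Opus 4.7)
My plan is to mirror the Landau case (Lemma \ref{l:invariants}), but with the divergence-form integration-by-parts there replaced by the symmetric weak form of $Q_B$ and the classical cancellation at grazing collisions. For $\varphi\in C^2$ with bounded second derivatives, the starting identity is
\[
\int \varphi(v)\, Q_B(g,g)\dd v = \frac{1}{2}\int\!\int\!\int B(v-v_*,\sigma)\, g(v) g(v_*) \bigl[\varphi' + \varphi_*' - \varphi - \varphi_*\bigr]\dd\sigma\dd v_*\dd v,
\]
which will be valid here (after the cancellation estimate below makes both sides absolutely convergent) because the assumptions on $g$, together with Lemmas \ref{l:Q1} and \ref{l:Q2}, make $Q_B(g,g)$ pointwise well-defined and locally integrable.

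The key technical ingredient will be the classical Boltzmann cancellation bound
\[
\left|\int_{S^2} b(\cos\eta)\bigl[\varphi' + \varphi_*' - \varphi - \varphi_*\bigr]\dd\sigma\right| \leq C\|D^2\varphi\|_\infty |v-v_*|^2.
\]
The second-order Taylor remainder in $u=v'-v$ contributes $\|D^2\varphi\|_\infty |u|^2 = \|D^2\varphi\|_\infty |v-v_*|^2\sin^2(\eta/2)$, integrable in $\sigma$ because $\int_{S^2} b(\cos\eta)\sin^2(\eta/2)\dd\sigma<\infty$ for $s<1$. The first-order Taylor term $[\nabla\varphi(v)-\nabla\varphi(v_*)]\cdot u$ is individually non-integrable in $\sigma$, but decomposing $\sigma = \cos\eta\,\hat n + \sigma_\perp$ with $\hat n=(v-v_*)/|v-v_*|$, the $\sigma_\perp$-odd part vanishes by azimuthal symmetry of $b$, leaving only the factor $\int_{S^2} b(\cos\eta)(1-\cos\eta)\dd\sigma<\infty$.

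For the first limit I take $\varphi=\chi_R$, so that $\|D^2\chi_R\|_\infty\lesssim R^{-2}$, and the cancellation estimate yields
\[
\left|\int \chi_R\, Q_B(g,g)\dd v\right| \lesssim R^{-2}\int\!\int |v-v_*|^{\gamma+2}\, g(v) g(v_*)\dd v\dd v_*.
\]
The double integral is finite under the hypotheses: on $|v-v_*|\leq 1$ it is handled by convolution interpolation between $\|g\|_{L^1}$ and $\|g\|_{L^\infty}$, exactly as in Lemma \ref{l:coeffs}; on $|v-v_*|>1$ one uses $|v-v_*|^{\gamma+2}\lesssim 1+|v|^{\gamma+2}+|v_*|^{\gamma+2}$ together with the hypothesis $(1+|w|^{\gamma+2})g\in L^1$. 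Sending $R\to\infty$ produces the first limit.

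For the second limit I take $\Phi_R := \chi_R |v|^2$. A direct computation shows $\|D^2\Phi_R\|_\infty\leq C$ \emph{uniformly} in $R$, because the $R^{-k}$ decay of $\partial^k\chi_R$ on its annular support $|v|\sim R$ is balanced exactly by factors of $|v|^{2-k}$. The cancellation bound then only provides the $R$-independent dominant function $C|v-v_*|^{\gamma+2} g(v) g(v_*)$, but this is integrable by the argument above; meanwhile, for every fixed $(v,v_*,\sigma)$, $\chi_R\to 1$ pointwise, so by conservation of energy $\Phi_R' + \Phi_{R,*}' - \Phi_R - \Phi_{R,*}\to |v'|^2+|v_*'|^2-|v|^2-|v_*|^2=0$. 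Dominated convergence, applied first in $\sigma$ (with dominant $C|v-v_*|^2$) and then in $(v,v_*)$, gives the second limit. The main obstacle will be executing the cancellation estimate cleanly enough that no angular singularity survives the $\sigma$-integration, as this is what makes the Fubini-DCT argument go through under only the stated $L^1$ moment hypothesis on $g$.
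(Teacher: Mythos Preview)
Your proposal is correct and follows essentially the same approach as the paper's own proof: both use the symmetrized weak form, extract the $\|D^2\varphi\|_{L^\infty}|v-v_*|^2$ bound by Taylor expansion plus azimuthal cancellation of the first-order term, then use $\|D^2\chi_R\|_{L^\infty}\lesssim R^{-2}$ for the first limit and a uniform bound on $\|D^2(\chi_R|w|^2)\|_{L^\infty}$ together with dominated convergence and pointwise energy conservation for the second. Your explicit splitting of the $|v-v_*|^{\gamma+2}$ integral into near and far regions is a harmless elaboration of what the paper states in one line.
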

This lemma is more or less understood in the literature on the Boltzmann equation. We give a proof for the convenience of the reader, and because we could not find an easy reference to apply in our setting.
\begin{proof}
The well-known weak formulation of the Boltzmann collision operator allows one to make sense of integrals of the form $\int_{\R^3} \varphi Q_B (g,g) \dd w$ using smoothness of $\varphi$. For any function $f$, let us introduce the abbreviations $f = f(w)$, $f_* = f(w_*)$, $f' = f(w')$, and $f_*' = f(w_*')$. Applying the pre-post collisional change of variables $(\sigma, w,w_*)\leftrightarrow (\sigma,w',w_*')$ (with unit Jacobian) one has
\begin{equation*}%\label{e:weak-form}
 \int_{\R^3} \varphi Q_B(g,g) \dd w = \int_{\R^3} \int_{\R^3}\int_{\mathbb S^2} B(w-w_*, \sigma) g g_* [\varphi' - \varphi] \dd \sigma \dd w_* \dd w.
 \end{equation*}
Symmetrizing further with the change of variables $w \leftrightarrow w_*$, which also exchanges $w'$ and $w_*'$, one has 
\begin{equation}\label{e:weak-form}
 \int_{\R^3} \varphi Q_B(g,g) \dd w = \frac 1 2 \int_{\R^3} \int_{\R^3} \int_{\mathbb S^2} B(w-w_*,\sigma) g g_* [\varphi_*' + \varphi' - \varphi_* - \varphi] \dd \sigma \dd w_* \dd w.
 \end{equation}
These formal calculations can be justified rigorously under our assumption that $g$ is smooth, provided that $\varphi$ is (say) $C^2$ and compactly supported.

%With $\varphi(w) = |w|^2 \chi(w/R)$, the integrand on the right converges to 0 pointwise because $|w_*'|^2 + |w'|^2 = |w_*|^2 + |w|^2$ (this identity reflects the conservation of energy during collisions). 
The expression $\varphi_*' + \varphi' - \varphi_* - \varphi$ is equal to zero for the following three cases: $\varphi = 1$, $\varphi = w$, and $\varphi = |w|^2$. This reflects the conservation of mass, momentum, and energy during collisions. 

Taylor expanding $\varphi$ and using $w_*' + w' = w_* + w$ and $|w_*' - w_*| = |w' - w|$, we have
\[ \begin{split}
\varphi_*' + \varphi' - \varphi_* - \varphi  &= \nabla \varphi_* \cdot (w_*' - w_*) + \nabla \varphi \cdot (w' - w) + O(\|D^2\varphi\|_{L^\infty} |w' - w|^2)\\
&= (\nabla \varphi - \nabla \varphi_* )\cdot (w' - w) +   O(\|D^2\varphi\|_{L^\infty}|w' - w|^2).
%&= O(M|w'-w||w-w_*|) + O(M|w'-w|^2).
\end{split}\]
It follows from the geometry of collisions that $|w'-w| \approx |w-w_*|\eta$. Therefore, the second term in the last expression is proportional to $\eta^2|w-w_*|^2$, which is good enough to cancel the angular singularity $\eta^{-2-2s}$, but the first term is only proportional to $\eta |w-w_*|^2$. We get around this problem in a standard way, by parametrizing $\mathbb S^2$ in spherical coordinates $(\phi,\eta) \in [0,2\pi]\times [0,\pi]$ (where $\eta = 0$ corresponds to $w = w'$) and realizing that $\left|\int_0^{2\pi} (w' - w) \dd \phi\right| \lesssim |w-w_*|\eta^2$. We now have
\[ \left|\int_0^{2\pi}\left[\varphi_*' + \varphi' - \varphi_* - \varphi\right]\dd \phi\right| \lesssim \|D^2\varphi\|_{L^\infty} \eta^2 |w-w_*|^2,\]
 and
 \begin{equation}
 \begin{split}
 \left|  \int_{\R^3} \varphi Q_B(g,g) \dd w \right| &\lesssim \int_{\R^3}\int_{\R^3} g g_* |w-w_*|^{\gamma+2} \int_0^\pi \eta^{-2-2s}\|D^2\varphi\|_{L^\infty} \eta^2 \sin \eta \dd \eta \dd w_* \dd w\\
   &\lesssim \|D^2\varphi\|_{L^\infty} \int_{\R^3}\int_{\R^3} g g_* |w-w_*|^{\gamma+2} \dd w_* \dd w.
   \end{split}
\end{equation}
The last integral is convergent by our assumption that $(1+|w|^{\gamma+2}) g \in L^1$. 

Now, with the choice $\varphi(w) = \chi(|w|/R)$, since $\|D^2\varphi\|_{L^\infty} \lesssim R^{-2}$, we see directly that $\int \chi(|w|/R) Q_B(g,g) \dd w \to 0$ as $R\to \infty$.

If we choose $\varphi(w) = |w|^2\chi(|w|/R)$, then $\|D^2\varphi\|_{L^\infty}$ is bounded independently of $R$. Writing \eqref{e:weak-form} as the integral over $\R^3\times\R^3 \times [0,\pi]$ of 
\[F_R(w,w_*,\eta) := |w-w_*|^\gamma b(\cos\eta) g g_* \int_0^{2\pi}[ \varphi_*' + \varphi' - \varphi_* - \varphi] \dd \phi,\]
then $F_R$ converges to 0 pointwise as $R\to \infty$, and by the above integrability estimates, we may apply Dominated Convergence to conclude $\int |w|^2 \chi(|w|/R) Q_B(g,g) \dd w \to 0$. 
\end{proof}
%To make the proof rigorous, one must side-step the grazing collisions singularity in $Q_B$ by excluding a ball of size $\eps$ near $\eta \approx 0$ and take $\eps \to 0$. We omit the details, but note that the principal value integrals are always well-defined because $g$ satisfies \eqref{e:g-smooth}.

% In the distinguished cases $\theta = \pm 1/3$, we as usual need the entropy inequality
%\[ \int_{\R^3} \log g \, Q_B(g,g) \dd w \leq 0,\]
%which follows from well-known formal calculations (see \cite[Chapter 1, Section 2.4]{villani2002review}) that are valid because $g\log g \in L^1_w$ for all $y$, by our assumption \eqref{e:entropy2}.

Now we are ready to prove our main result for the Boltzmann equation.

\begin{proof}[Proof of Theorem \ref{t:boltzmann}]
Proceeding as in the proof of Theorem \ref{t:landau}, we plug the ansatz \eqref{e:ansatz} into the Boltzmann equation \eqref{e:mainB}, and change variables to $y$ and $w$. The left-hand side transforms in the same way as before. For the right-hand side,
\begin{align*}
Q_B(f,f) = Q_B(\phi,\phi) + \frac 1 {(-t)^{1+\theta(3+\gamma)}}[Q_B(\phi,g) + Q_B(g,\phi)] + \frac 1 {(-t)^{2+2\theta(3+\gamma)}} Q_B(g,g),
\end{align*}
where $g$ is evaluated at $(x/(-t)^{1+\theta}, v/(-t)^\theta)$. Applying the decomposition $Q_B = Q_1+Q_2$ and changing variables appropriately, we have
\[ \begin{split}
Q_1(\phi,g) &\approx \int_{\R^3} [g((v+h)/(-t)^\theta) - g(v/(-t)^\theta)] |h|^{-3-2s} \int_{z\perp h} \phi(v+z) |z|^{\gamma+2s+1} \dd z \dd h\\
&= (-t)^{-2s\theta} \int_{\R^3} [g(w+\tilde h) - g(w)] |\tilde h|^{-3-2s} \int_{z\perp\tilde h} \phi((-t)^\theta w + z) |z|^{\gamma+2s+1} \dd z \dd \tilde h\\
&=: (-t)^{-2s\theta}  \tilde Q_{1}(\phi,g),
\end{split}
\]
and
\[ \begin{split}
Q_1(g,\phi) &\approx \int_{\R^3} [\phi(v+h) - \phi(v)] |h|^{-3-2s} \int_{z\perp h} g((v+z)/(-t)^\theta) |z|^{\gamma+2s+1} \dd z \dd h\\
&= (-t)^{(\gamma+2s+3)\theta} \int_{\R^3} [\phi((-t)^\theta w+ h) - \phi((-t)^{\theta}w)] | h|^{-3-2s} \int_{\tilde z\perp h} g( w + \tilde z) |\tilde z|^{\gamma+2s+1} \dd \tilde z \dd \tilde h\\
&=: (-t)^{(\gamma+2s+3)\theta} \tilde  Q_{1}(g,\phi).
\end{split}
\]
(Note that $\{z\perp h\}$ is a two-dimensional subspace.) By similar calculations, we have
\[ \begin{split}
Q_1(g,g) &= (-t)^{(\gamma+3)\theta} \int_{\R^3} [g(w+\tilde h) - g(w)]|\tilde h|^{-3-2s} \int_{\tilde z\perp \tilde h} g(w+\tilde z)|\tilde z|^{\gamma+2s+1} \dd \tilde z \dd \tilde h\\
&=: (-t)^{(\gamma+3)\theta} Q_{1,w}(g,g).
\end{split}\]
Since $Q_2(h_1,h_2) \approx \bar c^{h_1}h_2$, calculations from the proof of Theorem \ref{t:landau} imply
\[ Q_2(\phi,g) \approx \bar c^\phi g, \quad Q_2(g,\phi) \approx (-t)^{\theta(\gamma+3)} \bar c^g \phi, \]
and we abuse notation by writing $=$ instead of $\approx$ (which amounts to a change of constants).
%
%
%When we rename $v/(-t)^\theta = w$ and make the change of variables $w_* = v_*/(-t)^{\theta}$ in the integrals, the angular cross-section $b(\cos\eta)$ is unchanged, since $(v-v_*)/|v-v_*| = (w-w_*)/|w-w_*|$. We also define $w' = v'/(-t)^\theta$ and $w_*' = v_*'/(-t)^\theta$.  We now have (suppressing the dependence on $x$ and $y$)
%\begin{align*}
%Q_B(g(v/(-t)^\theta),g(v/(-t)^\theta)) &= \int_{\R^3}\int_{\mathbb S^2} b(\cos\eta) |(-t)^\theta(w-w_*)|^\gamma [g(w_*') g(w') - g(w_*)g(w)]\dd \sigma (-t)^{3\theta} \dd w_*\\
%&= (-t)^{(3+\gamma)\theta} Q_{B,w}(g(w),g(w)),
%\end{align*}
%where $Q_{B,w}$ is the collision operator acting in $w$ variables.
%Similarly,
%\begin{align*}
%Q_B(g(v/(-t)^\theta),\phi(v)) &= (-t)^{(3+\gamma)\theta}\int_{\R^3}\int_{\mathbb S^2} B(w-w_*,\sigma) [g(w_*') \phi\left((-t)^\theta w'\right) - g(w_*)\phi((-t)^\theta w)]\dd \sigma \dd w_*\\
%&= (-t)^{(3+\gamma)\theta} Q_{B,w}(g(w),\phi((-t)^\theta w)) ,\\
%Q_B(\phi(v),g(v/(-t)^\theta)) &= (-t)^{(3+\gamma)\theta}\int_{\R^3}\int_{\mathbb S^2} B(w-w_*,\sigma) [ \phi\left((-t)^\theta w_*'\right)g(w') - \phi((-t)^\theta w_*)g(w)]\dd \sigma \dd w_*\\
%&=: (-t)^{(3+\gamma)\theta} Q_{B,w}(\phi((-t)^\theta w),g(w)),
%\end{align*}
%where $B(w-w_*,\sigma) = b(\cos \eta) |w-w_*|^\gamma$. 
%Therefore, the right-hand side of \eqref{e:mainB} can be written
%\begin{align*}
%Q_B(f,f) &= Q_B(\phi,\phi) + \frac 1 {(-t)} [ Q_{B,w}(\phi((-t)^\theta w),g) +  Q_{B,w}(g,\phi((-t)^\theta w))] + \frac 1 {(-t)^{2+ \theta(3+\gamma)}}Q_{B,w}(g,g).
%\end{align*}
Making these substitutions in the right-hand side of \eqref{e:mainB}, multiplying through by $(-t)^{2+\theta(3+\gamma)}$, and grouping terms, we have
\begin{equation}\label{e:expansion}
\begin{split}
0 &= g + (1+\theta)y\cdot \nabla_y g + \theta w\cdot \nabla_w g + w\cdot \nabla_y g -   Q_{B,w}(g,g)  - (-t)^{1-2s\theta} \tilde Q_{1}(\phi,g)\\
&\quad  - (-t)^{1+(\gamma+2s+3)\theta} \tilde Q_{1}(g,\phi)  - (-t)\bar c^\phi g - (-t)^{1+\theta(3+\gamma)} \bar c^g \phi\\
&\quad  + (-t)^{2+\theta(3+\gamma)} (\partial_t \phi + v\cdot \nabla_x \phi - Q_B(\phi,\phi)),
\end{split}
\end{equation}
where, as above, $\phi$ is understood to stand for $\phi(t,(-t)^{1+\theta}y,(-t)^\theta w)$, and $g = g(y,w)$. We remark that, as $s\to 1$, all exponents in this expansion converge to the exponents of the corresponding terms in \eqref{e:expansion-L} in the proof of Theorem \ref{t:landau}. 

The error is defined as
\[\begin{split}
\mathcal E(\phi,g) &= - (-t)^{1-2s\theta} \tilde Q_{1}(\phi,g) - (-t)^{1+(\gamma+2s+3)\theta}  \tilde Q_{1}(g,\phi)\\
&\quad  - (-t)\bar c^\phi g - (-t)^{1+\theta(3+\gamma)} \bar c^g \phi + (-t)^{2+\theta(3+\gamma)} (\partial_t \phi + v\cdot \nabla_x \phi - Q_1(\phi,\phi) - \bar c^\phi \phi).\end{split}\]
%We will now show $\mathcal E(\phi,g) \to 0$, uniformly on compact sets of $\R^3_y\times \R^3_w$.
We claim that for all $R_1,R_2>0$,
\begin{equation}\label{e:E-Boltz}
\lim_{t \to 0} \sup_{\abs{y} \leq R_1} \sup_{\abs{w}\leq R_2} \abs{\mathcal E(\phi,g)} = 0. 
\end{equation}
First, all terms in $\mathcal E(\phi,g)$ that do not involve $Q_1$ or $\tilde Q_1$ are equal to corresponding terms in the proof of Theorem \ref{t:landau}, so the same arguments (which do not require any restriction on $\theta$ from above) imply convergence to zero in the sense of \eqref{e:E-Boltz} for those terms.

Now we address the singular integral terms.\footnote{The following calculation is similar to the proof of \cite[Lemma 2.3]{imbert2019lowerbounds}.}  For any integer $k$, let $A_k$ denote the annulus $\{2^k \leq |v| < 2^{k+1} \}$. Splitting $\tilde Q_1(\phi,g)$ into integrals over $|\tilde h| <1$ and $|\tilde h| \geq 1$, we have, using \eqref{e:kernel-bound},
\begin{equation}\label{e:annulus1}
\begin{split}
&(-t)^{1-2s\theta}\int_{|\tilde h|\geq 1} [g(w+\tilde h) - g(w)] K_\phi(t,(-t)^{1+\theta}y, (-t)^\theta w, \tilde h) \dd \tilde h\\
& =  (-t)^{1-2s\theta} \sum_{k\geq 0} \int_{A_k} [g(w+\tilde h) - g(w)] K_\phi(t,(-t)^{1+\theta}y, (-t)^\theta w, \tilde h) \dd \tilde h\\
&\lesssim (-t)^{1-2s\theta} \|g\|_{L^\infty_w} \sum_{k\geq 0} 2^{-2sk} \|\phi(t,(-t)^{1+\theta}y, \cdot)\|_{L^p_v}^{(\gamma+2s+3)p/3} \|\phi(t,(-t)^{1+\theta}y,\cdot)\|_{L^\infty_v}^{1-(\gamma+2s+3)p/3}\\
&\lesssim \left[ (-t)^{1+\theta(3+\gamma)-3\theta/p} \|\phi(t,(-t)^{1+\theta}y, \cdot)\|_{L^p_v}\right]^{(\gamma+2s+3)p/3}\\
&\qquad \cdot \left[ (-t)^{1+\theta(3+\gamma)} \|\phi(t,(-t)^{1+\theta}y,\cdot)\|_{L^\infty_v}\right]^{1-(\gamma+2s+3)p/3}.
\end{split}
\end{equation}
with $1\leq p < 3/(\gamma+2s+3)$. The second factor converges to 0 by \eqref{e:lim-phi-t1}. For the first factor, we use \eqref{e:lim-phi-t1} again, which requires $p> 3\theta/(1+\theta(3+\gamma))$. Therefore, an admissible $p$ satisfies
\[ \frac {3\theta}{1+\theta(3+\gamma)} < p < \frac 3 {\gamma+ 2s + 3},\]
which is possible since $\theta < 1/(2s)$.

For the integral over $|\tilde h|< 1$, we write
\[ g(w+\tilde h) - g(w) = \nabla_w g(w)\cdot \tilde h + E(w,\tilde h)|\tilde h|^2,\]
with $|E(w,\tilde h)| \lesssim \|D_w^2 g\|_{L^\infty_w}$. By the symmetry of the kernel $K_\phi$, the term with $\nabla_w g(w)$ vanishes, and we have, with $p$ as in the previous paragraph,
\[\begin{split}
&(-t)^{1-2s\theta} \int_{|\tilde h| < 1} [g(w+\tilde h) - g(w)] K_\phi(t,(-t)^{1+\theta}y, (-t)^\theta w, \tilde h) \dd \tilde h\\
& =  (-t)^{1-2s\theta} \sum_{k<0} \int_{A_k} E(w,\tilde h)|\tilde h|^2 K_\phi(t,(-t)^{1+\theta}y, (-t)^\theta w, \tilde h) \dd \tilde h\\
&\lesssim (-t)^{1-2s\theta} \|D_w^2 g\|_{L^\infty_w} \sum_{k<0} 2^{(2-2s)k} \|\phi(t,(-t)^{1+\theta}y, \cdot)\|_{L^p_v}^{(\gamma+2s+3)p/3} \|\phi(t,(-t)^{1+\theta}y,\cdot)\|_{L^\infty_v}^{1-(\gamma+2s+3)p/3},
\end{split}\]
which converges to zero using \eqref{e:lim-phi-t1}, as above. For $\tilde Q_1(g,\phi)$, we divide the $h$ integral into the annuli $\tilde A_k = \{(-t)^\theta 2^k < |v| \leq (-t)^\theta 2^{k+1}\}$ (which are the same as $A_k$, read in $h$ variables rather than $\tilde h$). By a similar Taylor expansion for $|h|<1$, we have, with $p$ as above,
\[\begin{split}
&(-t)^{1+\theta(\gamma + 2s + 3)}  \left( \sum_{k\geq 0}\int_{\tilde A_k} [\phi((-t)^\theta w + h) - \phi((-t)^\theta w)] K_g(y,w,h) \dd h\right.\\
&\left.\qquad + \sum_{k<0} \int_{\tilde A_k} [\phi((-t)^\theta w + h) - \phi((-t)^\theta w)] K_g(y,w,h) \dd h\right)\\
&\lesssim (-t)^{1+\theta(\gamma + 2s + 3)}\left(  \|\phi(t,(-t)^{1+\theta}y,\cdot)\|_{L^\infty_v} \sum_{k\geq 0} (-t)^{-2s\theta} 2^{-2sk} \right.\\
&\left.\qquad + \|D_v^2 \phi(t,(-t)^{1+\theta}y,\cdot)\|_{L^\infty_v} \sum_{k<0} (-t)^{(2-2s)\theta} 2^{(2-2s)k}\right) \|g\|_{L^1_w}^{(\gamma+2s+3)/3}\|g\|_{L^\infty_w}^{-(\gamma+2s)/3}\\
&\lesssim (-t)^{1+\theta(\gamma+3)}\|\phi(t,(-t)^{1+\theta}y,\cdot)\|_{L^\infty_v} + (-t)^{1+\theta(\gamma+5)}\|D_v^2\phi(t,(-t)^{1+\theta}y,\cdot)\|_{L^\infty_v},
\end{split}\]
which converges to 0 by \eqref{e:lim-phi-t1}. 

For the term $Q_1(\phi,\phi)$, we apply \cite[Lemma 2.3]{imbert2019lowerbounds} directly to obtain, with $p$ as above and  $\phi  = \phi(t,(-t)^{(1+\theta)}y,(-t)^\theta w)$,
\[ \begin{split}
(-t)^{2 + \theta(3+\gamma)} Q_1(\phi,\phi) &\lesssim (-t)^{2 + \theta(3+\gamma)} \|D_v^2\phi\|_{L^\infty_w}^s \|\phi\|_{L^\infty_w}^{1-s} \int_{\R^3} \phi(t,(-t)^{(1+\theta)}y,(-t)^{\theta} w - z)|z|^{\gamma+2s} \dd z\\
&\lesssim  (-t)^{2 + \theta(3+\gamma)}\|D_v^2\phi\|_{L^\infty_v}^s \|\phi\|_{L^\infty_v}^{2-s - (\gamma+2s+3)p/3}\|\phi\|_{L^p_v}^{(\gamma+2s+3)p/3}\\
&= \left( (-t)^{1+\theta(5+\gamma)} \|D_v^2 \phi\|_{L^\infty_v}\right)^s \left( (-t)^{1+\theta(3+\gamma)}\|\phi\|_{L^\infty_v}\right)^{2-s-(\gamma+2s+3)p/3}\\
&\quad \cdot \left( (-t)^{1+\theta(3+\gamma)-3\theta/p}\|\phi\|_{L^p_v}\right)^{(\gamma+2s+3)p/3},
\end{split}\]
which also converges to 0 by \eqref{e:lim-phi-t1}. In the second line, we performed a convolution estimate as in \eqref{e:kernel-bound}. We could apply \eqref{e:lim-phi-t1} because  $1+\theta(3+\gamma) - 3\theta/p \geq 0$.

% a calculation similar to \eqref{e:annulus1} implies
%\[\begin{split}
%&(-t)^{2+\theta(3+\gamma)} \int_{|h|\geq 1} [\phi(t,(-t)^{1+\theta} y, (-t)^{\theta}w+h) - \phi(t,(-t)^{1+\theta}y, (-t)^{\theta} w)] K_\phi(t,(-t)^{1+\theta}y, (-t)^\theta w, h) \dd  h\\
%& =  (-t)^{1-2s\theta} \sum_{k\geq 0} \int_{A_k} [g(w+\tilde h) - g(w)] K_\phi(t,(-t)^{1+\theta}y, (-t)^\theta w, \tilde h) \dd \tilde h\\
%&\lesssim (-t)^{1-2s\theta} \|g\|_{L^\infty_w} \sum_{k\geq 0} 2^{-2sk} \|\phi(t,(-t)^{1+\theta}y, \cdot)\|_{L^1_v}^{(\gamma+2s+3)/3} \|\phi(t,(-t)^{1+\theta}y,\cdot)\|_{L^\infty_v}^{-(\gamma+2s)/3}\\
%&\lesssim \left[ (-t)^{1+\theta(3+\gamma)-3\theta} \|\phi(t,(-t)^{1+\theta}y, \cdot)\|_{L^1_v}\right]^{(\gamma+2s+3)/3} \left[ (-t)^{1+\theta(3+\gamma)} \|\phi(t,(-t)^{1+\theta}y,\cdot)\|_{L^\infty_v}\right]^{-(\gamma+2s)/3},
%\end{split}\]
% 

Multiplying \eqref{e:expansion} by any smooth, compactly supported test function and sending $t\to 0$, we conclude
\begin{equation}\label{e:t-zero}
 g + (1+\theta)y\cdot \nabla_y g + \theta w\cdot \nabla_w g + w\cdot \nabla_y g -   Q_{B,w}(g,g) = 0,
 \end{equation}
in the sense of distributions. As above, the regularity assumptions \eqref{e:g-smooth} for $g$ imply \eqref{e:t-zero} holds pointwise.
 
 From this point on, the proof is the same as for the Landau equation (the proof of Theorem \ref{t:landau}), since $\lim_{R\to\infty}\int_{\R^3}\chi(|w|/R) Q_{B,w}(g,g) \dd w = 0$ holds thanks to Lemma \ref{l:Binvariants}, as well as $\lim_{R\to\infty}\int_{\R^3}\chi(|w|/R) |w|^2 Q_{B,w}(g,g) \dd w =0$ in the distinguished cases $\theta = \pm 1/3$. (We can apply Lemma \ref{l:Binvariants} because $|w|^{2+\gamma} g \in L^1_w$, by assumption.) Applying the same argument as above, we conclude $g\equiv 0$ in all cases.
%, we integrate over $\R^6$, integrate by parts, we conclude 
%\[(-2-6\theta)\int_{\R^3} \int_{\R^3} g \dd w \dd y = 0.\]
%If $\theta \neq -\frac 1 3$, we are finished, and if $\theta = -\frac 1 3$, we multiply \eqref{e:t-zero} by $\log g$ and conclude $g\equiv 0$ using $\int_{\R^3} \int_{\R^3} Q_{B,w}(g,g) \log g \dd w \dd y \leq 0$. The integrations by parts are valid by ?? and ??.
\end{proof}

\bibliographystyle{abbrv}
\bibliography{blowup}

\end{document}